\newcommand{\HH}{\mathbb{H}}
\newcommand{\NN}{\mathbb{N}}
\newcommand{\RR}{\mathbb{R}}
\newcommand{\HS}{\mathbb{V}^{d+1}_0}
\newcommand{\HI}{\mathbb{V}^{d+1}}
\newcommand{\me}{\mathrm{e}}
\newcommand{\dint}{\, \text{\normalfont{d}}} 
\newcommand{\sign}{{\rm sign}}
\newcommand{\measure}[2]{\dint \Pi_{#1 - \frac{1}{2}}(#2)}
\DeclarePairedDelimiter{\norm}{\lVert}{\rVert}
\DeclarePairedDelimiter{\abs}{\lvert}{\rvert}
\DeclarePairedDelimiter{\inner}{\langle}{\rangle}
\numberwithin{equation}{section}
\theoremstyle{plain}
\newtheorem{theorem}[equation]{Theorem}
\newtheorem{lemma}[equation]{Lemma}
\newtheorem{corollary}[equation]{Corollary}
\newtheorem{definition}[equation]{Definition}
\newtheorem{observation}[equation]{Observation}
\theoremstyle{definition}
\newtheorem{remark}[equation]{Remark}
\begin{document}
	
	\title[Sharp estimates for Jacobi heat kernels in double conic domains]{Sharp estimates for Jacobi heat kernels \\ in double conic~domains}
	
	\author{Dawid Hanrahan}
	\address{Dawid Hanrahan (\textnormal{dawid.hanrahan@pwr.edu.pl})
		\newline WUST -- Wroc\l aw University of Science and Technology, 50-370 Wroc\l aw, Poland}			
	
	\begin{abstract} We study the even and odd Jacobi heat kernels defined in the context of the multidimensional double cone $\HI$ and its surface $\HS$, the multidimensional hyperboloid $\prescript{}{\varrho}{\HI}$ and its surface $\prescript{}{\varrho}{\HS}$, and the multidimensional paraboloid  $\widetilde{\mathbb{V}}^{d+1}$ and its surface $\widetilde{\mathbb{V}}^{d+1}_0$. By integrating the framework of Jacobi polynomials on these domains, as analyzed by Xu, with contemporary methods developed by Nowak, Sjögren, and Szarek for obtaining sharp estimates of the spherical heat kernel, we establish genuinely sharp estimates for the even and odd Jacobi heat kernel on $\HI$ and $\HS$, and for the even Jacobi heat kernel on $\prescript{}{\varrho}{\HI}$ and $\prescript{}{\varrho}{\HS}$. We also discuss the limitations encountered in extending these results to the remaining settings.
	
	\smallskip	
	\noindent \textbf{2020 Mathematics Subject Classification:} Primary 35K08, 33C50.
	
	\smallskip
	\noindent \textbf{Key words:} multidimensional double cone, Jacobi heat kernel.
	\end{abstract}

\maketitle

\section{Introduction}

Heat kernels are fundamental in both mathematics and physics. Despite extensive research on this topic, obtaining genuinely sharp estimates has only recently extended beyond classical contexts like the hyperbolic space $\HH^{d+1}$, see \cite{DM88}. Sharp estimates for the spherical heat kernel were achieved in \cite{NSS18}, and later extended to Jacobi heat kernels for all compact rank-one symmetric spaces, including the classical domain $[-1,1]$, in \cite{NSS21} using earlier techniques from \cite{NS13}.

The goal of this paper is to establish genuinely sharp estimates for even and odd Jacobi heat kernels on various multidimensional conic and hyperbolic domains. We analyze the double cone $\HI$ and its surface $\HS$ along with the hyperboloid $\prescript{}{\varrho}{\HI}$ and its surface $\prescript{}{\varrho}{\HS}$, originally introduced by Xu in \cite{Xu21}, as well as the paraboloid $\widetilde{\mathbb{V}}^{d+1}$ and its surface $\widetilde{\mathbb{V}}^{d+1}_0$ introduced by Xu in \cite{Xu23}. The obtained results are stated in Theorems~\ref{T1} and \ref{T2} and Corollaries~\ref{C11} and \ref{C12} for double cones, in Corollaries~\ref{C1} and \ref{C2} for the hyperbolic settings, and in Observations \ref{O1} and \ref{O2} for the parabolic settings. The approach presented in this paper builds upon the methods developed in \cite{NSS21}. We define ``genuinely sharp'' as finding exact bounds for these kernels from both above and below, a level of accuracy previously attained only in a few specific settings \cite{BM16, MSZ16, MS20, Se22, HK23, NSS24}.

Before presenting the results, we briefly discuss the general framework of heat kernels, referencing \cite[Section 3]{DX14} and \cite[Sections~1~and~2]{Xu21}.

Given a domain $\Omega \subset \RR^d$ with an appropriate weight $\varpi$, one can construct an orthogonal polynomial basis in $\mathcal{L}^2(\Omega, \varpi)$. For each degree $n$, the subspace $\mathcal{V}_n(\varpi)$ spans the corresponding orthogonal polynomials. The orthogonal projection ${\rm proj}_n \colon \mathcal{L}^2(\Omega, \varpi) \to \mathcal{V}_n(\varpi)$ takes the form 
\begin{equation*} 
    {\rm proj}_n f(x) \coloneqq \int_\Omega f(y) P_n(\varpi; x,y) \dint \varpi(y), 
\end{equation*} 
where $P_n(\varpi; x,y)$ is the reproducing kernel for $\mathcal{V}_n(\varpi)$. Our focus is on $(\Omega, \varpi)$ satisfying two key properties.
\begin{enumerate} \itemsep=0.1cm 
    \item \label{item1} There exists a linear second-order differential operator $\mathcal{D}$ for which the orthogonal polynomials are its eigenfunctions, and the eigenvalues depend only on the degree $n$. This diffusion operator, defined on a suitable subspace of $\mathcal{L}^2(\Omega, \varpi)$, is characterized by orthogonal~expansions. 
    \item \label{item2} Each $P_n$ is given by a closed-form expression.     
\end{enumerate}

If these properties hold, one can define the associated heat kernel for $\tau \in (0, \infty)$ as
\begin{equation*} 
    h_{\tau}(\varpi;x,y) \coloneqq \sum_{n=0}^{\infty} \me^{- \tau \lambda_n^{\mathcal D}} P_{n}(\varpi;x,y), 
\end{equation*} 
where $\lambda_n^{\mathcal{D}}$ are eigenvalues of $\mathcal{D}$ corresponding to $\mathcal{V}_n(\varpi)$. Intuitively, $h_\tau(\varpi; x,y)$ captures the heat distribution between points $x$ and $y$ over time $\tau$ when $\mathcal{D}$ governs the diffusion process. Thus, it serves as a fundamental solution to initial value problems for the heat equation. When closed-form expressions for $P_n$ are available, sharp estimates for $h_{\tau}(\varpi;x,y)$ can be obtained for specific inputs.

Such scenarios, however, are uncommon. In \cite{Xu21}, Xu examined the double cone $\HI$ and its surface $\HS$, as well as the hyperboloid $\prescript{}{\varrho}{\HI}$ and its surface $\prescript{}{\varrho}{\HS}$, for $d \geq 2$. Xu found weights $\varpi$ that allowed both properties to hold and derived exact formulas for the resulting Jacobi heat kernels, named after their clear connection to classical Jacobi theory. On the other hand weights found by Xu in \cite{Xu23} (see Observation~\ref{O1} and \ref{O2}) do not allow both properties to hold.

The following sections summarize Xu's contributions, focusing on essential aspects relevant to our results. Deriving the weight $\varpi$ involved additional tools such as spherical harmonics and classical Jacobi polynomials, reflecting how multidimensional double cones and hyperboloids inherit features of both intervals and Euclidean balls. Xu began with simpler settings (intervals and balls) before constructing orthogonal polynomials and diffusion operators on $\HI$, $\HS$, $\prescript{}{\varrho}{\HI}$, and $\prescript{}{\varrho}{\HS}$. We avoid restating all details, concentrating instead on key formulas critical to our analysis. For a comprehensive exposition, see \cite{Xu21} and \cite{Xu23}. Additionally you may check the monographs~\cite{DX13, DX14}.

\subsection{Jacobi Heat Kernel on the Surface of the Double Cone}

The content of this section is derived from \cite[Sections~3--5]{Xu21}. Fix $d \in \{2, 3, \ldots\}$ and consider the domain
\begin{equation}
    \label{def1}
    \HS \coloneqq \big\{(x, t) \in \mathbb{R}^{d} \times \mathbb{R} :  \norm{x}^2 = t^2, \, 0 \le \abs{t} \le 1 \big\}.
\end{equation}

We equip \eqref{def1} with the weight function
\begin{equation*} 
    w_{\beta, \gamma}(t) = \abs{t}^{2\beta}(1 - t^2)^{\gamma - \frac{1}{2}}, \qquad \beta > -\frac{d+1}{2}, \quad \gamma > -\frac{1}{2}. 
\end{equation*}

For each $n \in \{1, 2, \dots \}$, the space $\mathcal{V}_n(w_{\beta, \gamma})$ of orthogonal polynomials of degree $n$ associated with $w_{\beta, \gamma}$ is defined in terms of the so-called Jacobi polynomials on the surface of the double~cone.

The space $\mathcal{V}_n(w_{\beta, \gamma})$ can be represented as a direct sum
\begin{equation*} 
    \mathcal{V}_n(w_{\beta, \gamma}) = \mathcal{V}^E_n(w_{\beta, \gamma}) \bigoplus \mathcal{V}^O_n(w_{\beta, \gamma}), 
\end{equation*}
where $\mathcal{V}^E_n(w_{\beta, \gamma})$ is the subspace consisting of polynomials that are even in the variable $t$ and, similarly, $\mathcal{V}^O_n(w_{\beta, \gamma})$ denotes the subspace consisting of polynomials that are odd in $t$.

Moreover, there exists a suitable second-order differential operator $\mathcal{D}^E_\gamma$ acting on a subspace of $\mathcal L^2(\HS, c_{\beta=0, \gamma} w_{\beta=0, \gamma})$, where $c_{\beta=0, \gamma}$ is the corresponding normalizing constant\footnote{That is, $c_{\beta=0, \gamma}$ is the unique constant such that $c_{\beta=0, \gamma} w_{\beta=0, \gamma}$ becomes a probability measure on $\HS$.}, with all $\mathcal{V}^E_n(w_{\beta=0, \gamma})$ being its eigenspaces. The associated eigenvalues are $-n(n + 2\gamma + d - 1)$.

We do not use the formula for $\mathcal{D}^E_\gamma$ in subsequent sections, but for completeness we recall~that
\begin{equation*} 
\mathcal{D}^E_\gamma \coloneqq (1 - t^2) \partial^2_t - (2\gamma + d)t \partial_t +\frac{d-1}{t} \partial_t + \frac{1}{t^2} \Delta^{(x)}_0, 
\end{equation*}
where $\Delta^{(x)}_0$ denotes the Laplace--Beltrami operator acting in the direction of the spherical variable $x$, see \cite[Theorem~4.2]{Xu21}.

Analogically there exists a second-order differential operator $\mathcal{D}^O_\gamma$ acting on a subspace of $\mathcal L^2(\mathbb{V}^{d+1}_0, c_{\beta=-1, \gamma} w_{\beta=-1, \gamma})$, where again $c_{\beta=-1, \gamma}$ is the corresponding normalizing constant, with all $\mathcal{V}^O_n(w_{\beta=-1, \gamma})$ being its eigenspaces. The associated eigenvalues are $-n(n + 2\gamma + d - 3)$.

Although we do not use the formula for $\mathcal{D}^O_\gamma$ later on, we recall that
\begin{equation*} 
    \mathcal{D}^O_\gamma \coloneqq (1 - t^2) \partial^2_t - (2\gamma + d - 2)t \partial_t + \frac{d-3}{t} \bigg(\partial_t - \frac{1}{t}\bigg) + \frac{1}{t^2} \Delta^{(x)}_0,
\end{equation*}
see \cite[Theorem~4.2]{Xu21}.

If $\beta = 0$ and $\gamma \geq 0$, then the reproducing kernel of $\mathcal{V}^E_n(w_{\beta=0, \gamma})$ is given by the following formula (cf. \cite[(5.7)]{Xu21})
\begin{equation} 
    \label{RK1} 
    P^E_n\big(w_{\beta=0, \gamma}; (x, t), (y, s)\big) \coloneqq \int \limits_{-1}^{1} Z_{n}^{\gamma + \frac{d-1}{2}} \big(\xi(x, t, y, s; v)\big) \dint\Pi_{\gamma - \frac{1}{2}}(v).
\end{equation}
Here $\dint\Pi_{\eta}(z) \coloneqq c_{\eta}(1 - z^2)^{\eta - \frac{1}{2}} \dint z$ for $z \in [-1, 1]$ if $\eta \in (-\frac{1}{2}, \infty)$, where $c_{\eta}$ is the normalizing constant, while $\dint\Pi_{-\frac{1}{2}}$ is the mean of Dirac deltas $\frac{1}{2}(\delta_{-1} + \delta_{1})$. Furthermore, 
\begin{equation*} 
    \xi(x, t, y, s; v) \coloneqq \xi(v) \coloneqq I_1 + vI_2
\end{equation*}
with $I_1 \coloneqq \inner{x, y} \sign(st)$ and $I_2 \coloneqq \sqrt{1 - s^2}\sqrt{1 - t^2}$. In \eqref{RK1}, for $n \in \NN$, $\lambda \in (0, \infty)$, and $z \in [-1, 1]$, we use the special function
\begin{equation} 
    \label{Gegen} 
    Z_{n}^{\lambda + \frac{1}{2}}(z) \coloneqq \frac{C_{n}^{\lambda + \frac{1}{2}}(1) C_{n}^{\lambda + \frac{1}{2}}(z)}{h^{\lambda + \frac{1}{2}}_n} = \frac{P{n}^{\lambda, \lambda}(1) P_{n}^{\lambda, \lambda}(z)}{h_{n}^{\lambda, \lambda}},
\end{equation}
where $C_{n}^{\lambda + \frac{1}{2}}$ is the Gegenbauer polynomial of degree $n$, and $P_n^{\lambda, \lambda}$ is the classical Jacobi polynomial on the interval $[-1,1]$, while $h^{\lambda + \frac{1}{2}}_n$ and $h_n^{\lambda, \lambda}$ are the squares of their norms in the space $\mathcal L^2\big([-1, 1], \dint \Pi_{\gamma + \frac{1}{2}}\big)$. In \cite[Subsection~2.2]{Xu21}, the function $Z_{n}^{\lambda + \frac{1}{2}}$ is defined through the Gegenbauer polynomial, but here we will use the latter expression in \eqref{Gegen} instead. Both expressions are equivalent since $P_n^{\lambda, \lambda}$ is a constant multiple of $C_{n}^{\lambda + \frac{1}{2}}$ (see \cite[(4.7.1)]{Sz75}). 

The relationship between $P^O_n$ and $P^E_n$ is given by the following formula
\begin{equation} 
    \label{kernel-relation-surface} P^O_n\big(w_{\beta, \gamma}; (x, t), (y, s)\big) = \frac{\beta + \frac{d-1}{2} + \gamma + 1}{\beta + \frac{d}{2}} st P^E_n\big(w_{\beta+1, \gamma}; (x, t), (y, s)\big),
\end{equation}
for $\beta > -\frac{d+1}{2}$ and $\gamma > -\frac{1}{2}$, see \cite[Theorem~5.1]{Xu21}\footnote{In \cite[Theorem~5.1]{Xu21} the author considered only $\beta > - \tfrac{1}{2}$ but careful reading of the proof reveals that the same is true for $\beta > - \tfrac{d + 1}{2}$.}.

In this paper, we focus on finding sharp estimates for even and odd Jacobi heat kernels separately, however it is sufficient to find the sharp estimate for $P^E_n$. Estimate for $P^O_n$ is a~consequence of \eqref{kernel-relation-surface}.

\begin{definition}
	\label{HK1}
	Let $\gamma \in (-\frac{1}{2}, \infty)$. Then for each $\tau \in (0, \infty)$, the associated \emph{even Jacobi heat kernel} on $\HS$ is given by
    \begin{equation*}
        h^E_\tau\big(w_{\beta=0, \gamma}; (x, t), (y, s)\big) \coloneqq \sum \limits_{n=1}^{\infty} \me^{-\tau n (n + 2\gamma + d - 1)} P^E_n\big(w_{\beta=0, \gamma}; (x, t), (y, s)\big).
    \end{equation*}
\end{definition} 

\noindent According to this, for the even Jacobi heat kernel on $\HS$ we shall show the following result. 

\begin{theorem} \label{T1} 
	Let $\gamma \in [0, \infty)$. Then $h^E_\tau\big(w_{\beta=0, \gamma}; (x, t), (y, s)\big)$, the even Jacobi heat kernel on $\HS$, is comparable to
    \begin{equation*}
        \tau^{-\frac{d}{2}} \big( \pi - \arccos(I_1 + I_2) + \tau \big)^{-\gamma - \frac{d}{2} + \frac{1}{2}} \big(I_2 \vee \tau \big)^{-\gamma} \exp \bigg\{-\frac{\arccos^2( 
        I_1 + I_2)}{4\tau}\bigg\}
    \end{equation*}
    for $\tau \in (0, 1]$, and is comparable to 1 for $\tau \in (1, \infty)$, uniformly in $(x, t), (y, s) \in \HS$.
\end{theorem}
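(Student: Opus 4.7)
The plan is to reduce the problem to a sharp one-dimensional Jacobi heat kernel estimate, to which the machinery of \cite{NSS21} applies, and then to handle the remaining integration by a Laplace-type asymptotic analysis. Substituting the integral representation \eqref{RK1} into Definition~\ref{HK1} and interchanging sum and integral (justified by the Gaussian damping $\me^{-\tau n^2}$ for $\tau > 0$), we get
\begin{equation*}
    h^E_\tau\bigl(w_{\beta=0,\gamma}; (x,t), (y,s)\bigr) = \int_{-1}^{1}\Biggl[\sum_{n=1}^{\infty} \me^{-\tau n(n+2\gamma+d-1)} Z_n^{\gamma+\frac{d-1}{2}}\bigl(\xi(v)\bigr)\Biggr] \dint \Pi_{\gamma - \frac{1}{2}}(v).
\end{equation*}
By \eqref{Gegen}, the bracketed series is, after restoring the trivial $n=0$ summand, precisely the classical symmetric (ultraspherical) Jacobi heat kernel on $[-1,1]$ with both parameters equal to $\gamma + \tfrac{d-2}{2}$, evaluated at the pair $\bigl(\xi(v),1\bigr)$. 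Thus $h_\tau^E$ becomes a one-dimensional average of a well-studied Jacobi heat kernel against the Jacobi probability measure $\dint \Pi_{\gamma-1/2}(v) = c_{\gamma-1/2}(1-v^2)^{\gamma-1}\dint v$ (or, when $\gamma = 0$, against the symmetric pair of Dirac masses at $\pm 1$).

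Next I would invoke the sharp two-sided estimate of \cite{NSS21} for this one-dimensional Jacobi heat kernel. For $\tau\in(0,1]$ it expresses as a product of an explicit $\tau$-power prefactor, polynomial corrections tied to the two endpoints $\pm 1$ of the Jacobi interval, and a Gaussian damping $\me^{-\arccos^2 \xi(v)/(4\tau)}$. Because $\xi(v) = I_1 + v I_2$ is an affine, strictly increasing function of $v$ (when $I_2 > 0$), the Gaussian is maximized at $v = 1$, where $\arccos \xi(v) = \arccos(I_1 + I_2)$. The integrand therefore concentrates near $v = 1$, and the endpoint correction at $-1$---which involves $\pi - \arccos \xi(v)$---can be replaced, up to comparability constants, by its value at $v = 1$ and pulled out as the factor $\bigl(\pi - \arccos(I_1 + I_2) + \tau\bigr)^{-\gamma - d/2 + 1/2}$ appearing in the target estimate.

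The core of the argument is then the Laplace-type evaluation of the resulting one-dimensional integral. Linearizing
\begin{equation*}
    \arccos \xi(v) \approx \arccos(I_1 + I_2) + \frac{I_2(1-v)}{\sqrt{1-(I_1+I_2)^2}}
\end{equation*}
near $v = 1$ and changing variable $u = 1 - v$, the problem reduces to a standard Laplace integral of the form $\int_0^2 u^{\gamma - 1} \Phi(u) \me^{-(A+Bu)^2/(4\tau)}\dint u$ with $A = \arccos(I_1 + I_2)$ and $B \asymp I_2 / \sqrt{1-(I_1+I_2)^2}$, which displays the Jacobi singularity $u^{\gamma-1}$ at the peak. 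Balancing this singularity against the Gaussian yields the $(I_2 \vee \tau)^{-\gamma}$ factor through a case split according to whether $I_2$ is smaller or larger than $\tau$; combined with the ``$+1$'' endpoint correction from \cite{NSS21}, this assembles the $\tau^{-d/2}$ prefactor in the statement. The large-time case $\tau > 1$ follows from the spectral gap $\lambda_1 = 2\gamma + d$, which causes the series (starting at $n = 1$) to decay exponentially from above, while a direct positivity argument on the $n=1$ mode provides the matching lower bound, yielding comparability with the constant $1$. The main obstacle, as is typical in this context, will be ensuring \emph{sharpness} of the Laplace analysis: matching lower bounds with the claimed constants uniformly across all geometric regimes (small vs.\ large $I_2$, near-diagonal vs.\ near-antipodal $\arccos(I_1 + I_2)$, and the degenerate boundary case $\gamma = 0$) requires a careful partition of the integration region and precise balancing of the Jacobi singularity against the Gaussian concentration width and each endpoint correction, in the spirit of \cite[Sections~3--4]{NSS21}.
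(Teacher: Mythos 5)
Your architecture coincides with the paper's up to the decisive step: the paper also rewrites the kernel via the product formula as $c_{\gamma-\frac12}\int_{-1}^{1} G_\tau^{\gamma+\frac d2-1,\gamma+\frac d2-1}\bigl(1,\xi(v)\bigr)\dint\Pi_{\gamma-\frac12}(v)$ (Lemma~\ref{HKG1}), inserts the sharp one-dimensional bound of Lemma~\ref{LNSS1}, and uses monotonicity of $\xi$ in $v$ together with Lemma~\ref{LNSS2} to cut the integral down to $[0,1]$. But where the paper then simply invokes the ready-made integral estimate \cite[Lemma 2.1]{NSS21} (Lemma~\ref{LNSS3} with Remark~\ref{remark1}, applied with $\eta=\gamma+\frac d2-\frac12$, $A=I_1$, $B=I_2$, $D=4\tau$, $\nu=\gamma-\frac12$), you propose to redo that evaluation by hand, and your sketch has a genuine gap. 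The linearization $\arccos\xi(v)\approx\arccos(I_1+I_2)+I_2(1-v)/\sqrt{1-(I_1+I_2)^2}$ is not uniformly valid: at the diagonal ($I_1+I_2\to1$) the derivative of $\arccos$ blows up and $\arccos\xi(v)$ behaves like a square root of $1-v$, not affinely. More importantly, even accepting your model, the effective Gaussian decay rate in $u=1-v$ is $AB/\tau$ with $A=\arccos(I_1+I_2)$ and your $B\asymp I_2/\sqrt{1-(I_1+I_2)^2}$, so $AB\asymp I_2\bigl(\pi-\arccos(I_1+I_2)\bigr)^{-1}$; balancing against $u^{\gamma-1}$ then yields $\bigl(I_2(\pi-\arccos(I_1+I_2))^{-1}\vee\tau\bigr)^{-\gamma}$ --- exactly the shape Lemma~\ref{LNSS3} produces --- and \emph{not} the claimed $(I_2\vee\tau)^{-\gamma}$. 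Passing from the former to the latter is a separate step (the final display of the paper's proof, which rests on the geometric constraints tying $I_1$ and $I_2$ on $\HS$), and it is silently skipped in your write-up: your case split ``$I_2$ smaller or larger than $\tau$'' does not see the denominator you yourself introduced. Since the uniform analysis across the near-diagonal, near-antipodal, and generic regimes is precisely the content of the available Lemma~\ref{LNSS3}, deferring it with ``requires a careful partition in the spirit of \cite{NSS21}'' leaves the heart of the proof unexecuted; the efficient route is to cite the lemma, as the paper does.

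Two further points. For $\gamma=0$ the measure $\dint\Pi_{-\frac12}$ is $\tfrac12(\delta_{-1}+\delta_1)$, so there is no density $u^{\gamma-1}$ and your Laplace model degenerates outright, whereas Lemma~\ref{LNSS3} covers $\nu=-\tfrac12$ directly. And your large-time argument is incorrect as stated: the eigenfunctions in $\mathcal V_1^E$ change sign, so ``a direct positivity argument on the $n=1$ mode'' cannot supply a lower bound comparable to $1$; with the constant term restored (your reading here agrees with Lemma~\ref{HKG1}), comparability with $1$ for $\tau>1$ requires bounding the whole tail and, for moderate $\tau$, positivity of the kernel itself (e.g.\ via the semigroup property) --- the paper does not prove this regime at all but defers it to known results (comment~(d) in the introduction).
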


A straightforward consequence of \eqref{kernel-relation-surface} and Theorem \ref{T1} is the following.

\begin{corollary}
    \label{C11}
    Let $\gamma \in [0, \infty)$. Then $h^O_\tau\big(w_{\beta=-1, \gamma}; (x, t), (y, s)\big)$, the odd Jacobi heat kernel on $\HS$, is comparable to
    \begin{equation*}
        \tau^{-\frac{d}{2}} st \big( \pi - \arccos(I_1 + I_2) + \tau \big)^{-\gamma - \frac{d}{2} + \frac{1}{2}} \big(I_2 \vee \tau \big)^{-\gamma} \exp \bigg\{-\frac{\arccos^2( 
        I_1 + I_2)}{4\tau}\bigg\}
    \end{equation*}
    for $\tau \in (0, 1]$, and is comparable to 1 for $\tau \in (1, \infty)$, uniformly in $(x, t), (y, s) \in \HS$.
\end{corollary}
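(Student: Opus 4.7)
The approach is to substitute the kernel identity~\eqref{kernel-relation-surface}, specialized at $\beta=-1$, into the series defining the odd Jacobi heat kernel. Since
\begin{equation*}
	P^O_n\bigl(w_{-1,\gamma};(x,t),(y,s)\bigr) \;=\; \frac{d-1+2\gamma}{d-2}\, st\, P^E_n\bigl(w_{0,\gamma};(x,t),(y,s)\bigr),
\end{equation*}
and the eigenvalues of $\mathcal{D}^O_\gamma$ on $\mathcal{V}^O_n(w_{-1,\gamma})$ are $-n(n+2\gamma+d-3)$, substitution yields
\begin{equation*}
	h^O_\tau\bigl(w_{-1,\gamma};(x,t),(y,s)\bigr) \;=\; \frac{d-1+2\gamma}{d-2}\, st \sum_{n=1}^{\infty} \me^{-\tau n(n+2\gamma+d-3)} P^E_n\bigl(w_{0,\gamma};(x,t),(y,s)\bigr).
\end{equation*}
The factor $st$ already appears outside the sum, matching the $st$ factor in the claimed bound, so the remaining task is to estimate the series on the right.

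That series differs from the even Jacobi heat kernel $h^E_\tau(w_{0,\gamma})$ of Definition~\ref{HK1} only by the shift of exponents $n(n+2\gamma+d-1)\mapsto n(n+2\gamma+d-3)$, i.e.\ by a factor of $\me^{2\tau n}$ in each term. I would argue that the proof of Theorem~\ref{T1} applies \emph{mutatis mutandis} with the shifted exponents: after inserting the integral representation~\eqref{RK1}, the inner sum over $n$ becomes $\sum_n \me^{-\tau n(n+2\gamma+d-3)}Z_n^{\gamma+(d-1)/2}(\xi)$, which is handled by the same sharp Jacobi-type estimates that underlie Theorem~\ref{T1}. The shift of $-2n$ in the exponent changes only the polynomial prefactor by a bounded multiplicative factor on the range $n\lesssim \tau^{-1/2}$ that dominates, and the Gaussian exponent $\me^{-\arccos^2(I_1+I_2)/(4\tau)}$, the factor $(\pi-\arccos(I_1+I_2)+\tau)^{-\gamma-d/2+1/2}$, and the critical prefactor $(I_2\vee\tau)^{-\gamma}$ all remain invariant.

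Once this comparability is verified, multiplying by $\frac{d-1+2\gamma}{d-2}\, st$ reproduces the $\tau\in(0,1]$ bound in the statement. For $\tau\in(1,\infty)$, the exponential $\me^{-\tau n(n+2\gamma+d-3)}$ decays rapidly enough (already for $n\ge 1$, since $2\gamma+d-3\ge -1$ and $n(n+2\gamma+d-3)\ge n$ whenever $2\gamma+d\ge 4$; for the low-dimensional/low-$\gamma$ edge cases, $n=1$ contributes at most a bounded term) that the series is dominated by finitely many terms and is comparable to $1$, whence $h^O_\tau(w_{-1,\gamma})\asymp st$.

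The main obstacle is the parameter-shift comparison in the middle paragraph: one has to verify that every upper and lower estimate in the proof of Theorem~\ref{T1} is stable under the replacement of the eigenvalues of $\mathcal{D}^E_\gamma$ by those of $\mathcal{D}^O_\gamma$, \emph{uniformly} in $(x,t), (y,s)\in\HS$ and $\tau$. Although this amounts to rebookkeeping in an argument that has already been carried out for the even case, it is the only step that is not completely mechanical; it must be checked at the level where the balance between the Gaussian factor and the polynomial prefactor is set, as this balance determines the sharpness of both bounds.
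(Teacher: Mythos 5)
Your opening reduction---substituting \eqref{kernel-relation-surface} at $\beta=-1$, pulling out the constant $\tfrac{2\gamma+d-1}{d-2}$ (which you compute correctly) together with the factor $st$, and appealing to Theorem~\ref{T1}---is the same skeleton the paper uses. But your treatment of the eigenvalue discrepancy is a genuine gap. Keeping the degree index $n$ on both sides of the kernel relation while using the odd eigenvalues $-n(n+2\gamma+d-3)$ leaves you with the mismatched series $\sum_{n}\me^{-\tau n(n+2\gamma+d-3)}P^E_n(w_{0,\gamma})$, which you propose to handle by rerunning Theorem~\ref{T1} \emph{mutatis mutandis}, arguing that the extra factor $\me^{2\tau n}$ only perturbs things boundedly on the dominant range $n\lesssim\tau^{-1/2}$. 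That reasoning cannot work as stated: the series is oscillatory, and in the regime where $\arccos(I_1+I_2)$ is close to $\pi$ the kernel has size of order $\me^{-\pi^2/(4\tau)}$ times negative powers of $\tau$, exponentially smaller than any single term, so its value is produced by cancellation and termwise insertion of $\me^{2\tau n}$ proves nothing. Nor is the mismatched sum $\sum_n\me^{-\tau n(n+2\lambda-1)}Z_n^{\lambda+\frac12}$ of the form $G_\tau^{\lambda,\lambda}(1,\cdot)$ covered by Lemma~\ref{LNSS1}, so none of the quoted lemmas applies to it. You flag this step yourself as the one that "must be checked''; it is precisely the sharpness-critical step, so the proof is incomplete exactly where it matters.

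The missing observation that makes the corollary a genuine one-liner is the degree bookkeeping in the odd/even correspondence. Since $t^2\,w_{-1,\gamma}=w_{0,\gamma}$, multiplication by $t$ maps $\mathcal{V}^E_{n-1}(w_{0,\gamma})$ isometrically onto $\mathcal{V}^O_n(w_{-1,\gamma})$, so the relation behind \eqref{kernel-relation-surface} shifts the degree by one: the odd kernel of degree $n$ is the constant times $st\,P^E_{n-1}(w_{0,\gamma})$. With the odd eigenvalues one then has the exact identity $n(n+2\gamma+d-3)=(n-1)\big((n-1)+2\gamma+d-1\big)+(2\gamma+d-2)$, whence, reindexing $m=n-1$,
\begin{equation*}
    h^O_\tau\big(w_{\beta=-1,\gamma};(x,t),(y,s)\big)=\frac{2\gamma+d-1}{d-2}\,\me^{-\tau(2\gamma+d-2)}\,st\sum_{m\ge 0}\me^{-\tau m(m+2\gamma+d-1)}P^E_m\big(w_{\beta=0,\gamma};(x,t),(y,s)\big),
\end{equation*}
and the series on the right is exactly the even quantity that Lemma~\ref{HKG1} represents and Theorem~\ref{T1} estimates. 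Since $\me^{-\tau(2\gamma+d-2)}\simeq 1$ uniformly for $\tau\in(0,1]$, the small-time bound of the corollary follows from Theorem~\ref{T1} with no re-estimation of any series---this is what makes the paper's "straightforward consequence'' legitimate, and it is the idea your proposal lacks. Note also that your large-time conclusion $h^O_\tau\asymp st$ uniformly in $\tau\in(1,\infty)$ does not follow from your mismatched series either (the dominant $n=1$ term carries the decaying factor $\me^{-\tau(2\gamma+d-2)}$ and a factor $P^E_1\propto I_1$ that changes sign), whereas the exact identity above settles the large-time regime directly from the even-kernel estimate.
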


\subsection{Jacobi Heat Kernel on the Solid Double Cone}

The content of this section is, again, derived from \cite[Sections~3--5]{Xu21}. Fix $d \in \{2, 3, \ldots\}$ and consider the domain
\begin{equation}
    \label{def2}
    \HI \coloneqq \big\{(x, t) \in \mathbb{R}^{d} \times \mathbb{R} : \norm{x}^2 \le   t^2, \, 0 \le \abs{t} \le 1 \big\}.
\end{equation}

We equip \eqref{def2} with the weight function
\begin{equation*} 
    w_{\beta, \gamma, \mu}(t) = \abs{t}^{2\beta}(1 - t^2)^{\gamma - \frac{1}{2}}(t^2 - \norm{x}^2)^{\mu - \frac{1}{2}}, \qquad \beta > -\frac{d+1}{2}, \quad \gamma > -\frac{1}{2}, \quad \mu > -\frac{1}{2}. 
\end{equation*}

As before, for each $n \in \{1, 2, \dots \}$, the space $\mathcal{V}_n(w_{\beta, \gamma, \mu})$ of orthogonal polynomials of degree $n$ associated with $w_{\beta, \gamma, \mu}$ is defined in terms of the so-called Jacobi polynomials on the solid double~cone and can be represented as a direct sum of $\mathcal{V}^E_n(w_{\beta, \gamma, \mu})$ and $\mathcal{V}^O_n(w_{\beta, \gamma, \mu})$ (subspaces consisting of polynomials, respectively, even and odd in the variable $t$).

Similarly, there exists a second-order differential operator $\mathcal{D}^E_{\gamma, \mu}$ acting on a subspace of $\mathcal L^2\big(\HS, c_{\beta=\frac{1}{2}, \gamma, \mu} w_{\beta=\frac{1}{2}, \gamma, \mu}\big)$, where $c_{\beta=\frac{1}{2}, \gamma, \mu}$ is the corresponding normalizing constant, with all $\mathcal{V}^E_n\big(w_{\beta=\frac{1}{2}, \gamma, \mu}\big)$ being its eigenspaces. The associated eigenvalues are $-n(n + 2\gamma + 2\mu + d)$.

Again, we do not use the formula for $\mathcal{D}^E_{\gamma, \mu}$, but for completeness we recall~that
\begin{align*} 
\mathcal{D}^E_{\gamma, \mu} &\coloneqq (1 - t^2) \partial^2_t + \Delta_x - \inner{x, \nabla_x}^2 + \frac{2}{t}(1 - t^2) \inner{x, \nabla_x} \partial_t \\
&\quad + (2\mu + d) \frac{1}{t} \partial_t - t \partial_t - (2\gamma + 2\mu + d)(t \partial_t + \inner{x, \nabla_x}), 
\end{align*}
where $\Delta_x$ denotes the Laplace operator and $\nabla_x$ is the gradient operator, both acting in the direction of the variable $x$, see \cite[Theorem~4.6]{Xu21}.

Analogically, there exists a second-order differential operator $\mathcal{D}^O_{\gamma, \mu}$ acting on a subspace of $\mathcal L^2\big(\mathbb{V}^{d+1}_0, c_{\beta=-\frac{1}{2}, \gamma, \mu} w_{\beta=-\frac{1}{2}, \gamma, \mu}\big)$ with all $\mathcal{V}^O_n\big(w_{\beta=-\frac{1}{2}, \gamma, \mu}\big)$ being its eigenspaces. The associated eigenvalues are $-n(n + 2\gamma + 2\mu + d)$.

For the sake of completeness, we recall that
\begin{align*} 
    \mathcal{D}^O_{\gamma, \mu} &\coloneqq (1 - t^2) \partial^2_t + \Delta_x - \inner{x, \nabla_x}^2 - \inner{x, \nabla_x} + \frac{2}{t}(1 - t^2) \inner{x, \nabla_x} \partial_t \\
    &\quad + \frac{2\mu + d - 2}{t} \bigg( \partial_t - \frac{1}{t} \bigg) - (2\gamma + 2\mu + d - 1)\big(t \partial_t + \inner{x, \nabla_x}\big),
\end{align*}
see \cite[Theorem~4.6]{Xu21}.

If $\beta = \frac{1}{2}$ and $\gamma, \mu \geq 0$, then the reproducing kernel of $\mathcal{V}^E_n\big(w_{\beta=\frac{1}{2}, \gamma, \mu}\big)$ is given by the following formula (cf. \cite[Corollary 5.6]{Xu21})
\begin{equation} 
    \label{RK2} 
    P^E_n\big(w_{\beta=\frac{1}{2}, \gamma, \mu}; (x, t), (y, s)\big) \coloneqq \int \limits_{-1}^{1} \int \limits_{-1}^{1} Z_{n}^{\gamma + \mu + \frac{d}{2}} \big(\xi(x, t, y, s; u, v)\big) \dint\Pi_{\mu - \frac{1}{2}}(u) \dint\Pi_{\gamma - \frac{1}{2}}(v),
\end{equation}
where $\dint\Pi_{\eta}(w)$ is as before, and
\begin{equation*} 
    \xi(x, t, y, s; u, v) \coloneqq \xi(u, v) \coloneqq I_1 + vI_2 + uI_3
\end{equation*}
with $I_1$ and $I_2$ as before and $I_3 \coloneqq \sqrt{t^2 - \norm{x}^2} \sqrt{s^2 - \norm{y}^2} \, \sign(st)$.
In \eqref{RK2}, we use the special function defined in \eqref{Gegen}.

The relationship between $P^O_n$ and $P^E_n$ is given by the following formula
\begin{equation} 
    \label{kernel-relation-solid} P^O_n\big(w_{\beta, \gamma}; (x, t), (y, s)\big) = \frac{\beta + \mu + \frac{d-1}{2} + \gamma + 1}{\beta + \mu + \frac{d}{2}} st P^E_n\big(w_{\beta+1, \gamma}; (x, t), (y, s)\big),
\end{equation}
for $\beta > -\frac{d+1}{2}$ and $\gamma, \mu > -\frac{1}{2}$, see \cite[Theorem~5.5]{Xu21}\footnote{Again, the range $\beta > -\frac{1}{2}$, as specified in \cite[Theorem~5.5]{Xu21}, can be extended to $\beta > - \tfrac{d + 1}{2}$.}.

Similarly to the previous subsection, it is sufficient to find genuinely sharp estimate for $P^E_n$. Estimate for $P^O_n$ is a~consequence of \eqref{kernel-relation-solid}.

\begin{definition}
	\label{HK2}
	Let $\gamma \in (-\frac{1}{2}, \infty)$ and $\mu \in (-\frac{1}{2}, \infty)$. Then for each $\tau \in (0, \infty)$, the associated \emph{even Jacobi heat kernel} on $\HI$ is given by
    \begin{equation*}
        h^E_\tau\big(w_{\beta=\frac{1}{2}, \gamma, \mu}; (x, t), (y, s)\big) \coloneqq \sum \limits_{n=1}^{\infty} \me^{-\tau n (n + 2\gamma + 2\mu + d)} P^E_n\big(w_{\beta=\frac{1}{2}, \gamma, \mu}; (x, t), (y, s)\big).
    \end{equation*}
\end{definition} 

\noindent According to this, for the even Jacobi heat kernel on $\HI$ we shall show the following result. 

\begin{theorem} \label{T2} 
	Let $\gamma \in [0, \infty)$ and $\mu \in [0, \infty)$. Then $h^E_\tau\big(w_{\beta=\frac{1}{2}, \gamma, \mu}; (x, t), (y, s)\big)$, the even Jacobi heat kernel on $\HI$, is comparable to
    \begin{equation*}
        \tau^{- \frac{d}{2} - \frac{1}{2}} \big( \tau + \pi - \arccos(I_1 + I_2 + I_3)\big)^{-\gamma - \mu - \frac{d}{2}} \big( I_2 \vee \tau \big)^{-\gamma} \big( I_3 \vee \tau \big)^{-\mu} \exp \bigg\{ -\frac{\arccos^2(I_1 + I_2 + I_3)}{4\tau} \bigg\}
    \end{equation*}
    for $\tau \in (0, 1]$, and is comparable to 1 for $\tau \in (1, \infty)$, uniformly in $(x, t), (y, s) \in \HI$.
\end{theorem}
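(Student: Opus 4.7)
The plan is to reduce $h^E_\tau$ to a one-dimensional Jacobi heat kernel on $[-1,1]$ by interchanging the series in Definition~\ref{HK2} with the double integral in \eqref{RK2}, apply the sharp bilateral estimate for the latter from~\cite{NSS21}, and then carry out a Laplace-type analysis of the resulting iterated integral in the two free variables $u$ and~$v$.

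First, since $|Z_n^{\lambda+\frac{1}{2}}(\xi)| \le Z_n^{\lambda+\frac{1}{2}}(1)$ grows only polynomially in $n$, the rapid decay of $\me^{-\tau n(n+2\lambda+1)}$ for $\tau > 0$ justifies Fubini and yields
\begin{equation*}
h^E_\tau\big(w_{\beta=\frac{1}{2},\gamma,\mu}; (x,t),(y,s)\big) = \int_{-1}^{1}\!\!\int_{-1}^{1} \mathcal{H}_\tau^{\lambda}\big(\xi(x,t,y,s;u,v)\big) \dint\Pi_{\mu-\frac{1}{2}}(u) \dint\Pi_{\gamma-\frac{1}{2}}(v),
\end{equation*}
where $\lambda := \gamma + \mu + \frac{d-1}{2}$ and $\mathcal{H}_\tau^{\lambda}(z) := \sum_{n=1}^{\infty} \me^{-\tau n(n+2\lambda+1)} Z_n^{\lambda+\frac{1}{2}}(z)$. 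By~\eqref{Gegen}, $\mathcal{H}_\tau^\lambda(z)$ is, up to the omitted $n=0$ constant, the symmetric Jacobi heat kernel on $[-1,1]$ with parameters $(\lambda,\lambda)$ evaluated at $(z,1)$. I would then invoke the sharp estimate from~\cite{NSS21}: for $\tau \in (0,1]$ and $z \in [-1,1]$,
\begin{equation*}
\mathcal{H}_\tau^{\lambda}(z) \approx \tau^{-\frac{1}{2}}\big(\tau + \pi - \arccos z\big)^{-\lambda-\frac{1}{2}}\exp\bigg\{-\frac{\arccos^2 z}{4\tau}\bigg\}.
\end{equation*}
Since $\lambda + \frac{1}{2} = \gamma + \mu + \frac{d}{2}$, proving Theorem~\ref{T2} reduces to showing that the iterated integral of the right-hand side, with $z = I_1 + vI_2 + uI_3$, against $\dint\Pi_{\mu-\frac{1}{2}}(u)\dint\Pi_{\gamma-\frac{1}{2}}(v)$ is two-sided comparable to the target expression in the statement.

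The technical heart is a bivariate Laplace-type analysis that closely parallels the univariate treatment of Theorem~\ref{T1} and differs from it only by the extra integration in $u$. Writing $\theta_0 := \arccos(I_1+I_2+I_3)$ and $\theta(u,v) := \arccos\xi(u,v)$, the Gaussian factor forces concentration near $(u,v) = (1,1)$, where $\theta$ attains its minimum $\theta_0$. A careful linearisation
\begin{equation*}
\theta(u,v) - \theta_0 \approx \frac{(1-u)I_3 + (1-v)I_2}{\sin\theta_0},
\end{equation*}
combined with the endpoint asymptotic $\dint\Pi_\eta(z) \asymp (1-z)^{\eta-\frac{1}{2}}\dint z$ as $z \to 1$ (with the boundary case $\eta = -\frac{1}{2}$ handled by the Dirac mass at $z=1$), decouples the double integral into a product of two one-variable Laplace integrals; each produces an extra factor $\tau^{-\frac{1}{2}}$ together with $(I_2 \vee \tau)^{-\gamma}$ and $(I_3 \vee \tau)^{-\mu}$, respectively, which combined with the $\tau^{-\frac{1}{2}}$ already present in $\mathcal{H}_\tau^\lambda$ account for the claimed prefactor $\tau^{-\frac{d}{2}-\frac{1}{2}}$. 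The matching lower bound is obtained by restricting integration to a suitable neighbourhood of $(1,1)$ on which both the Gaussian and the algebraic factor are of the right order.

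The main obstacle is exactly this third step: matching the upper and lower bounds in the \emph{transitional} regimes where only one of $I_2, I_3$ falls below $\tau$, so that one endpoint factor is dictated by the density and the other by the Gaussian width, and in the \emph{antipodal} regime $\theta_0 \to \pi$, where the algebraic factor $(\tau + \pi - \theta(u,v))^{-\gamma-\mu-\frac{d}{2}}$ must be controlled uniformly across both integration variables without losing a power or an extra logarithm. Once these cases are settled, the range $\tau \in (1,\infty)$ follows by standard arguments using the rapid decay of $\me^{-\tau n^2}$ together with the uniform bound $|P^E_n| \le P^E_n\big(w_{\beta=\frac{1}{2},\gamma,\mu};(x,t),(x,t)\big)$ propagated through~\eqref{RK2}.
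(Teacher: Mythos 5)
Your overall route coincides with the paper's (the representation of Lemma~\ref{HKG2}, the sharp one-dimensional Jacobi estimate, then a Laplace-type analysis in $u$ and $v$), but the one-dimensional estimate you quote is wrong, and the error propagates. By Lemma~\ref{LNSS1} (NSS21, specialized to first argument $1$) one has $G_\tau^{\lambda,\lambda}(1,\cos\psi) \simeq \tau^{-\lambda-1}\,(\tau+\pi-\psi)^{-\lambda-\frac12}\,\me^{-\psi^2/(4\tau)}$, not $\tau^{-\frac12}(\tau+\pi-\psi)^{-\lambda-\frac12}\me^{-\psi^2/(4\tau)}$: you dropped the factor $\tau^{-\lambda-\frac12}$ coming from $(\tau+\theta\varphi)^{-\lambda-\frac12}$ at $\theta=0$. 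Your subsequent bookkeeping is therefore inconsistent with the statement: $\tau^{-\frac12}\cdot\tau^{-\frac12}\cdot\tau^{-\frac12}=\tau^{-\frac32}$, which equals the claimed $\tau^{-\frac d2-\frac12}$ only for $d=2$, so the dimension dependence of the prefactor is lost. In the correct count one starts from $\tau^{-\gamma-\mu-\frac d2-\frac12}$ and each endpoint integration contributes a \emph{bounded} factor, $\tau^{\mu}(I_3\vee\tau)^{-\mu}$ and $\tau^{\gamma}(I_2\vee\tau)^{-\gamma}$ respectively (a positive power of $\tau$ absorbed against the endpoint factor), leaving $\tau^{-\frac d2-\frac12}$ -- there is no ``extra $\tau^{-1/2}$ per integral.''

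The step you defer as the ``technical heart'' is precisely where your sketch would break and where the paper does its actual work. Your linearisation $\theta(u,v)-\theta_0 \approx \big((1-u)I_3+(1-v)I_2\big)/\sin\theta_0$ degenerates exactly in the regimes you flag as obstacles ($\theta_0$ near $0$ or $\pi$), and the claimed decoupling into two univariate Laplace integrals is unjustified there. The paper sidesteps this entirely by invoking the ready-made two-sided integral estimate of \cite[Lemma~2.1]{NSS21} (Lemma~\ref{LNSS3}, with Remark~\ref{remark1}) \emph{twice in succession}: first in $u$ with $A=I_1+vI_2$, $B=I_3$, $D=4\tau$, $\nu=\mu-\frac12$, then -- after observing that $\pi-\arccos\xi(1,v)$ is comparable to a constant on $\HI$, which is what genuinely decouples the two variables -- in $v$ with $\nu=\gamma-\frac12$. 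That lemma already contains uniformly the transitional regimes ($I_2$ or $I_3$ of order $\tau$) and the antipodal factor; without it, or a proved bivariate substitute, your argument has a genuine gap rather than a routine verification. Finally, you assert concentration at $(u,v)=(1,1)$, but when $\sign(st)=-1$ one has $I_3\le 0$ and $\xi$ is \emph{decreasing} in $u$, so the minimum of $\theta$ sits at $(-1,1)$; the paper handles this by the substitution $w=-u$ (legitimate by the symmetry of $\dint\Pi_{\mu-\frac12}$) before Lemma~\ref{LNSS2} reduces the integration to $[0,1]^2$ -- a case your proposal never addresses.
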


A straightforward consequence of \eqref{kernel-relation-solid} and Theorem \ref{T2} is the following.

\begin{corollary}
    \label{C12}
	Let $\gamma \in [0, \infty)$ and $\mu \in [0, \infty)$. Then $h^O_\tau\big(w_{\beta=-\frac{1}{2}, \gamma, \mu}; (x, t), (y, s)\big)$, the odd Jacobi heat kernel on $\HI$, is comparable to
    \begin{equation*}
        \tau^{- \frac{d}{2} - \frac{1}{2}} st \big( \tau + \pi - \arccos(I_1 + I_2 + I_3)\big)^{-\gamma - \mu - \frac{d}{2}} \big( I_2 \vee \tau \big)^{-\gamma} \big( I_3 \vee \tau \big)^{-\mu} \exp \bigg\{ -\frac{\arccos^2(I_1 + I_2 + I_3)}{4\tau} \bigg\}
    \end{equation*}
    for $\tau \in (0, 1]$, and is comparable to 1 for $\tau \in (1, \infty)$, uniformly in $(x, t), (y, s) \in \HI$.
\end{corollary}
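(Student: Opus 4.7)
The plan is to derive this corollary as a direct algebraic consequence of Theorem~\ref{T2}, using the kernel identity \eqref{kernel-relation-solid} to convert the odd reproducing kernels into even ones. First I would specialize \eqref{kernel-relation-solid} to the solid cone setting with $\beta = -\tfrac{1}{2}$, which shifts the weight parameter to $\beta + 1 = \tfrac{1}{2}$ matching the even-kernel weight used in Theorem~\ref{T2}. This gives, for every $n \geq 1$,
\begin{equation*}
    P^O_n\big(w_{\beta=-\frac{1}{2}, \gamma, \mu}; (x,t),(y,s)\big) = \frac{\gamma + \mu + \frac{d}{2}}{\mu + \frac{d-1}{2}}\, st \, P^E_n\big(w_{\beta=\frac{1}{2}, \gamma, \mu}; (x,t),(y,s)\big),
\end{equation*}
with the prefactor being a strictly positive constant depending only on $\gamma, \mu, d$.

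Next I would check that the eigenvalues governing the two heat kernels agree, so that the conversion can be performed term by term inside the series. Both $\mathcal{V}^O_n(w_{\beta=-\frac{1}{2},\gamma,\mu})$ and $\mathcal{V}^E_n(w_{\beta=\frac{1}{2},\gamma,\mu})$ have associated eigenvalue $-n(n+2\gamma+2\mu+d)$, as recorded in the statements surrounding $\mathcal{D}^E_{\gamma,\mu}$ and $\mathcal{D}^O_{\gamma,\mu}$. Consequently, applying the identity above inside the Definition~\ref{HK2}-style expansion of $h^O_\tau$ yields the pointwise identity
\begin{equation*}
    h^O_\tau\big(w_{\beta=-\frac{1}{2}, \gamma, \mu}; (x,t),(y,s)\big) = \frac{\gamma + \mu + \frac{d}{2}}{\mu + \frac{d-1}{2}}\, st\, h^E_\tau\big(w_{\beta=\frac{1}{2}, \gamma, \mu}; (x,t),(y,s)\big),
\end{equation*}
valid on all of $\HI$.

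With this identity in hand, the corollary reduces to Theorem~\ref{T2}. The sharp two-sided bound on $h^E_\tau$ is multiplied by the positive constant and by the factor $st$, producing exactly the target expression for $\tau \in (0, 1]$, and for $\tau \in (1, \infty)$ the factor $st$ is bounded between $-1$ and $1$ on $\HI$, so the product is comparable to~$st$, which matches the desired bound after absorbing $st$ into the implicit constants on both sides. Comparability here is to be interpreted with the sign of $st$: since both the kernel and the quoted expression carry the same factor $st$, their ratio is bounded above and below by positive constants uniformly in $(x,t),(y,s) \in \HI$.

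I do not anticipate a genuine obstacle: the only point requiring minor care is the sign interpretation of $st$ inside the notion of comparability, and verifying that the algebraic prefactor appearing in \eqref{kernel-relation-solid} is strictly positive for the admissible range $\gamma, \mu \in [0, \infty)$ and $d \geq 2$, which it clearly is. The entire argument is routine once Theorem~\ref{T2} is granted.
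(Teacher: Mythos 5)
Your proposal is correct and is precisely the argument the paper intends: Corollary~\ref{C12} is stated there as a ``straightforward consequence'' of \eqref{kernel-relation-solid} and Theorem~\ref{T2}, which is exactly your route --- specialize \eqref{kernel-relation-solid} to $\beta=-\tfrac12$ (your constant $\bigl(\gamma+\mu+\tfrac{d}{2}\bigr)/\bigl(\mu+\tfrac{d-1}{2}\bigr)$ is the correct, strictly positive evaluation), use that the odd and even eigenvalues both equal $-n(n+2\gamma+2\mu+d)$ to convert the series term by term, and multiply the two-sided bound of Theorem~\ref{T2} by $st$. Your signed reading of comparability via the common factor $st$ is the right one (indeed, for $\tau>1$ the kernel is $\simeq st$ rather than literally $\simeq 1$, since it vanishes when $st=0$, so the paper's ``comparable to $1$'' must be understood modulo that shared factor --- though your phrase about ``absorbing $st$ into the implicit constants'' is loose, as $st$ varies with the points).
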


\subsection{Jacobi Heat Kernel on the Surface of the Hyperboloid.}

The content of this section is derived from \cite[Sections~3--5]{Xu21}. Fix $d \in \{2, 3, \ldots\}$ and consider the domain
\begin{equation}
    \label{def3}
    \prescript{}{\varrho}{\HS} \coloneqq \big\{(x, t) \in \mathbb{R}^{d} \times \mathbb{R} :  \norm{x}^2 = t^2 - \varrho^2, \, \varrho \le \abs{t} \le \sqrt{\varrho^2 + 1} \big\}, \qquad \varrho > 0.
\end{equation}

We equip \eqref{def3} with the weight function
\begin{equation*} 
    w_{\beta, \gamma}^{\varrho}(t) = \abs{t}(t^2 - \varrho^2)^{\beta - \frac{1}{2}}(\varrho^2 + 1 - t^2)^{\gamma - \frac{1}{2}}, \qquad \beta, \gamma > -\frac{1}{2}, \quad \varrho > 0. 
\end{equation*}

For each $n \in \{1, 2, \dots \}$, the space $\mathcal{V}_n(w_{\beta, \gamma}^{\varrho})$ of orthogonal polynomials of degree $n$ associated with the weight $w_{\beta, \gamma}^{\varrho}$ is defined in terms of the so-called Jacobi polynomials on the surface of the~hyperboloid.

Again, the space $\mathcal{V}_n(w_{\beta, \gamma}^{\varrho})$ can be represented as a direct sum of the even and odd part, but in this section we shall consider only the even part. The subspace containing polynomials odd in the variable $t$ is more complicated -- there is no associated second-order differential operator (see the discussion after the proof of \cite[Theorem~4.4]{Xu21}) and no closed-form expression for the reproducing kernel is available (see \cite[Subsection~5.1.2]{Xu21}).

If $\beta = 0$ and $\gamma > -\frac{1}{2}$, then there exists a second-order differential operator $\mathcal{D}^{E, \varrho}_\gamma$ acting on a~suitable subspace of $\mathcal L^2(\HS, c_{\beta=0, \gamma}^{\varrho} w_{\beta=0, \gamma}^{\varrho})$, where $c_{\beta=0, \gamma}^{\varrho}$ is the normalizing constant, with all $\mathcal{V}^E_n(w_{\beta=0, \gamma}^{\varrho})$ being its eigenspaces. The associated eigenvalues are $-n(n + 2\gamma + d - 1)$.

We do not use the formula for $\mathcal{D}^{E, \varrho}_{\gamma}$, but for completeness we recall that it is equal to
\begin{gather*} 
(1 + \varrho^2 - t^2)\bigg(1 - \frac{\varrho^2}{t^2}\bigg)\partial^2_t + \bigg((1 + \varrho^2 - t^2)\frac{\varrho^2}{t^2} - (2\gamma + d)(t^2 - \varrho^2) + d - 1\bigg)\frac{1}{t}\partial_t + \frac{1}{t^2 - \varrho^2} \Delta^{(x)}_0,
\end{gather*}
see \cite[Theorem~4.4]{Xu21}.

Define $t' \coloneqq \sqrt{t^2 - \varrho^2}$ and $s' \coloneqq \sqrt{s^2 - \varrho^2}$. If $\beta = 0$ and  $\gamma \ge 0$, then the reproducing kernel of $\mathcal{V}^E_n(w_{\beta=0, \gamma}^{\varrho})$ is given by the following formula (cf. \cite[(5.11)]{Xu21})
\begin{equation} 
    \label{RK3} 
    \prescript{}{\varrho}{P^E_n}\big(w_{\beta=0, \gamma}^{\varrho}; (x, t), (y, s)\big) \coloneqq c_{\gamma - \frac{1}{2}}\int \limits_{-1}^{1} Z_{n}^{\gamma + \frac{d-1}{2}} \big(\xi(x, t', y, s'; v)\big) \dint\Pi_{\gamma - \frac{1}{2}}(v).
\end{equation}

\begin{definition}
	\label{HK3}
	Let $\gamma \in [0, \infty)$. Then for each $\tau \in (0, \infty)$, the associated \emph{even Jacobi heat kernel} on $\prescript{}{\varrho}{\HS}$ is given by
    \begin{equation*}
        \prescript{}{\varrho}{h}^E_\tau\big(w_{\beta=0, \gamma}^{\varrho}; (x, t), (y, s)\big) \coloneqq \sum \limits_{n=1}^{\infty} \me^{-\tau n (n + 2\gamma + d - 1)} \prescript{}{\varrho}{P^E_n}\big(w_{\beta=0, \gamma}^{\varrho}; (x, t), (y, s)\big).
    \end{equation*}
\end{definition} 

\noindent By \eqref{RK3} the reproducing kernel on the surface of the hyperboloid is the reproducing kernel on the surface of the double cone with the change of variables $t \mapsto \sqrt{t^2 - \varrho^2}$ and $s \mapsto \sqrt{s^2 - \varrho^2}$. Using this, and denoting $I_2^{\varrho} \coloneqq \sqrt{1 + \varrho^2 - t^2}\sqrt{1 + \varrho^2 - s^2}$, we conclude the following.

\begin{corollary}
    \label{C1}
    Let $\gamma \in [0, \infty)$. Then $\prescript{}{\varrho}{h}^E_\tau\big(w_{\beta=0, \gamma}^{\varrho}; (x, t), (y, s)\big)$, the even Jacobi heat kernel on $\prescript{}{\varrho}{\HS}$, is comparable to
    \begin{equation*}
        \tau^{-\frac{d}{2}} \big( \pi - \arccos(I_1 + I_2^{\varrho}) + \tau \big)^{-\gamma - \frac{d}{2} + \frac{1}{2}} \big(I_2^{\varrho} \vee \tau \big)^{-\gamma} \exp \bigg\{-\frac{\arccos^2( 
        I_1 + I_2^{\varrho})}{4\tau}\bigg\}
    \end{equation*}
    for $\tau \in (0, 1]$, and is comparable to 1 for $\tau \in (1, \infty)$, uniformly in $(x, t), (y, s) \in \prescript{}{\varrho}{\HS}$.
\end{corollary}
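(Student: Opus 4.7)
The plan is to leverage the explicit correspondence between the reproducing kernels on $\prescript{}{\varrho}{\HS}$ and on $\HS$ furnished by formula \eqref{RK3}, which reduces everything to Theorem \ref{T1}. The core observation, already hinted at in the paragraph preceding the corollary, is that $\prescript{}{\varrho}{P^E_n}$ is just $P^E_n$ evaluated at transformed points — and since the eigenvalues coincide, the same is true of the heat kernels themselves.

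First, I would verify that for $(x, t), (y, s) \in \prescript{}{\varrho}{\HS}$, the auxiliary points $(x, t')$ and $(y, s')$, with $t' \coloneqq \sqrt{t^2 - \varrho^2}$ and $s' \coloneqq \sqrt{s^2 - \varrho^2}$, actually lie in $\HS$. Indeed $\norm{x}^2 = t^2 - \varrho^2 = (t')^2$ by the defining relation of the hyperboloid, and $t' \in [0,1]$ since $\varrho \le \abs{t} \le \sqrt{\varrho^2 + 1}$; analogously for $(y, s')$. Comparing \eqref{RK1} and \eqref{RK3} then gives
\[
\prescript{}{\varrho}{P^E_n}\big(w_{\beta=0,\gamma}^{\varrho}; (x,t), (y,s)\big) = c_{\gamma - \frac{1}{2}} \, P^E_n\big(w_{\beta=0,\gamma}; (x, t'), (y, s')\big),
\]
and since the eigenvalues $-n(n+2\gamma+d-1)$ appearing in Definitions \ref{HK1} and \ref{HK3} are identical, termwise summation yields
\[
\prescript{}{\varrho}{h}^E_\tau\big(w_{\beta=0,\gamma}^{\varrho}; (x,t), (y,s)\big) = c_{\gamma - \frac{1}{2}} \, h^E_\tau\big(w_{\beta=0,\gamma}; (x, t'), (y, s')\big).
\]

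At this point I would apply Theorem \ref{T1} to the right-hand side. Under the substitution $(t,s) \mapsto (t', s')$, the quantity $I_2 = \sqrt{1-s^2}\sqrt{1-t^2}$ becomes $\sqrt{1-(s')^2}\sqrt{1-(t')^2} = \sqrt{1+\varrho^2-t^2}\sqrt{1+\varrho^2-s^2} = I_2^{\varrho}$, while $I_1 = \inner{x,y}\sign(st)$ becomes $\inner{x,y}\sign(s't') = \inner{x,y}$, which is the $I_1$ implicitly meant in the statement of the corollary. Plugging this in produces the stated bound for $\tau \in (0,1]$, and the large-time bound $\prescript{}{\varrho}{h}^E_\tau \asymp 1$ for $\tau \in (1, \infty)$ transfers verbatim. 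Uniformity in $(x,t), (y,s) \in \prescript{}{\varrho}{\HS}$ follows from uniformity of Theorem \ref{T1} in $(x, t'), (y, s') \in \HS$, because the map $(x, t) \mapsto (x, t')$ carries $\prescript{}{\varrho}{\HS}$ onto (the non-negative half of) $\HS$.

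The main — and essentially only — point to watch is the bookkeeping around the sign convention in $I_1$: on the hyperboloid $t$ and $s$ can have arbitrary signs while $t', s'$ are non-negative, so the $\sign(st)$ appearing in Theorem \ref{T1} disappears after the change of variables. One should also note that the extra multiplicative factor $c_{\gamma - \frac{1}{2}}$ in \eqref{RK3} is harmless, being absorbed into the comparability constants. Aside from these minor checks, no further work is required; the corollary really is a direct translation of Theorem \ref{T1} via the change of variables $t \mapsto t'$, $s \mapsto s'$.
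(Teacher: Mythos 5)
Your proposal is correct and is exactly the paper's argument: the paragraph preceding Corollary \ref{C1} proves it by observing via \eqref{RK3} that the hyperboloid kernel is the double-cone kernel under $t \mapsto \sqrt{t^2 - \varrho^2}$, $s \mapsto \sqrt{s^2 - \varrho^2}$ (with identical eigenvalues), and then invoking Theorem \ref{T1}. Your additional bookkeeping --- checking that $(x,t'), (y,s') \in \HS$, absorbing the constant $c_{\gamma - \frac{1}{2}}$, and noting that $\sign(st)$ effectively disappears since $t', s' \ge 0$ (the degenerate case $t' = 0$ forcing $x = 0$, so $\inner{x,y} = 0$ anyway) --- is all consistent with what the paper leaves implicit.
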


\subsection{Jacobi Heat Kernel on the Solid Hyperboloid.}

The content of this section is derived from \cite[Sections~3--5]{Xu21}. Fix $d \in \{2, 3, \ldots\}$ and consider the domain
\begin{equation}
    \label{def4}
    \prescript{}{\varrho}{\HI} \coloneqq \big\{(x, t) \in \mathbb{R}^{d} \times \mathbb{R} :  \norm{x}^2 \le t^2 - \varrho^2, \, \varrho \le \abs{t} \le \sqrt{\varrho^2 + 1} \big\}, \qquad \varrho > 0.
\end{equation}

We equip \eqref{def4} with the weight function
\begin{equation*} 
    w_{\beta, \gamma, \mu}^{\varrho}(t) = \abs{t}(t^2 - \varrho^2)^{\beta - \frac{1}{2}}(\varrho^2 + 1 - t^2)^{\gamma - \frac{1}{2}}(t^2 - \varrho^2 - \norm{x}^2), \qquad \beta, \gamma, \mu > -\frac{1}{2}, \quad \varrho > 0. 
\end{equation*}

For each $n \in \{1, 2, \dots \}$, the space $\mathcal{V}_n(w_{\beta, \gamma, \mu}^{\varrho})$ of orthogonal polynomials of degree $n$ associated with the weight $w_{\beta, \gamma, \mu}^{\varrho}$ is defined in terms of the so-called Jacobi polynomials on the solid~hyperboloid.

The space $\mathcal{V}_n(w_{\beta, \gamma, \mu}^{\varrho})$ can be represented as a direct sum of the even and odd part, but we shall consider only the even part, as only in this case a suitable second-order differential operator and a closed-form expression for a reproducing kernel are available.

If $\beta = \frac{1}{2}$ and $\gamma, \mu > -\frac{1}{2}$, then there exists a second-order differential operator $\mathcal{D}^{E, \varrho}_{\gamma, \mu}$ acting on a suitable subspace of $\mathcal L^2(\HI, c_{\beta=\frac{1}{2}, \gamma, \mu}^{\varrho} w_{\beta=\frac{1}{2}, \gamma, \mu}^{\varrho})$, where $c_{\beta=\frac{1}{2}, \gamma, \mu}^{\varrho}$ is the normalizing constant, and all $\mathcal{V}^E_n(w_{\beta=\frac{1}{2}, \gamma, \mu}^{\varrho})$ are its eigenspaces. The associated eigenvalues are $-n(n + 2\gamma + 2\mu + d)$.

We do not use the formula for $\mathcal{D}^{E, \varrho}_{\gamma, \mu}$, but for completeness we recall that it is equal to
\begin{gather*} 
(1 + \varrho^2 - t^2)\bigg(1 - \frac{\varrho^2}{t^2}\bigg)\partial^2_t + \Delta_x - \inner{x, \nabla_x}^2 + \inner{x, \nabla_x} + \frac{2}{t}(1 + \varrho^2 - t^2) \inner{x, \nabla_x}\partial_t \\
+ \bigg((1 + \varrho^2 - t^2)\frac{\varrho^2}{t^2} + 2\mu + d\bigg)\frac{1}{t} \partial_t - (2\gamma + 2\mu + d + 1) \Bigg(\bigg(1 - \frac{\varrho^2}{t^2}\bigg)t \partial_t + \inner{x, \nabla_x}\Bigg),
\end{gather*}
see \cite[Theorem~4.8]{Xu21}.

As before, let $t' \coloneqq \sqrt{t^2 - \varrho^2}$ and $s' \coloneqq \sqrt{s^2 - \varrho^2}$. If $\beta = \frac{1}{2}$ and  $\gamma \ge 0$, then the reproducing kernel of $\mathcal{V}^E_n(w_{\beta=\frac{1}{2}, \gamma}^{\varrho})$ is given by the following formula (cf. \cite[(5.16)]{Xu21})
\begin{equation} 
    \label{RK4} 
    \prescript{}{\varrho}{P^E_n}\big(w_{\beta=\frac{1}{2}, \gamma, \mu}^{\varrho}; (x, t), (y, s)\big) \coloneqq \int \limits_{-1}^{1} \int \limits_{-1}^{1} Z_{n}^{\gamma + \mu + \frac{d}{2}} \big(\xi(x, t', y, s'; u, v)\big) \dint\Pi_{\mu - \frac{1}{2}}(u) \dint\Pi_{\gamma - \frac{1}{2}}(v).
\end{equation}

\begin{definition}
	\label{HK4}
	Let $\gamma \in [0, \infty)$ and $\mu \in (-\frac{1}{2}, \infty)$. Then for each $\tau \in (0, \infty)$, the associated \emph{even Jacobi heat kernel} on $\prescript{}{\varrho}{\HI}$ is given by
    \begin{equation*}
        \prescript{}{\varrho}{h}^E_\tau\big(w_{\beta=\frac{1}{2}, \gamma, \mu}^{\varrho}; (x, t), (y, s)\big) \coloneqq \sum \limits_{n=1}^{\infty} \me^{-\tau n (n + 2\gamma + d - 1)} \prescript{}{\varrho}{P^E_n}\big(w_{\beta=\frac{1}{2}, \gamma, \mu}^{\varrho}; (x, t), (y, s)\big).
    \end{equation*}
\end{definition} 

\noindent By \eqref{RK4} the reproducing kernel on the solid hyperboloid is the reproducing kernel on the solid double cone with the change of variables $t \mapsto \sqrt{t^2 - \varrho^2}$ and $s \mapsto \sqrt{s^2 - \varrho^2}$. Denoting $I_2^{\varrho}$ as before and $I_3^{\varrho} \coloneqq \sqrt{t^2 - \varrho^2 - \norm{x}^2} \sqrt{s^2 - \varrho^2 - \norm{y}^2} \, \sign(st)$, we conclude the following.

\begin{corollary}
    \label{C2}
	Let $\gamma \in [0, \infty)$ and $\mu \in [0, \infty)$. Then $\prescript{}{\varrho}{h}^E_\tau\big(w_{\beta=\frac{1}{2}, \gamma, \mu}; (x, t), (y, s)\big)$, the even Jacobi heat kernel on $\prescript{}{\varrho}{\HI}$, is comparable to
    \begin{equation*}
        \tau^{- \frac{d}{2} - \frac{1}{2}} \big( \tau + \pi - \arccos(I_1 + I_2^{\varrho} + I_3^{\varrho})\big)^{-\gamma - \mu - \frac{d}{2}} \big( I_2^{\varrho} \vee \tau \big)^{-\gamma} \big( I_3^{\varrho} \vee \tau \big)^{-\mu} \exp \bigg\{ -\frac{\arccos^2(I_1 + I_2^{\varrho} + I_3^{\varrho})}{4\tau} \bigg\}
    \end{equation*}
    for $\tau \in (0, 1]$, and is comparable to 1 for $\tau \in (1, \infty)$, uniformly in $(x, t), (y, s) \in \prescript{}{\varrho}{\HI}$.
\end{corollary}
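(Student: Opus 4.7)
The plan is to derive Corollary \ref{C2} as a direct transplantation of Theorem \ref{T2} under the substitution $t \mapsto t' = \sqrt{t^2 - \varrho^2}$ and $s \mapsto s' = \sqrt{s^2 - \varrho^2}$. Formula \eqref{RK4}, compared with \eqref{RK2}, identifies the reproducing kernel on $\prescript{}{\varrho}{\HI}$ with the reproducing kernel on $\HI$ evaluated at $(x, t')$ and $(y, s')$. Combined with the fact that the eigenvalues governing the two heat kernels coincide (reading the exponent in Definition \ref{HK4} as $n(n + 2\gamma + 2\mu + d)$, consistent with the spectrum $-n(n + 2\gamma + 2\mu + d)$ of $\mathcal{D}^{E, \varrho}_{\gamma, \mu}$), term-by-term summation yields
\begin{equation*}
    \prescript{}{\varrho}{h}^E_\tau\big(w^{\varrho}_{\beta = \tfrac{1}{2}, \gamma, \mu}; (x,t), (y,s)\big) = h^E_\tau\big(w_{\beta = \tfrac{1}{2}, \gamma, \mu}; (x, t'), (y, s')\big),
\end{equation*}
which reduces the assertion to reading off Theorem \ref{T2} at the transplanted arguments.

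First I would verify that the substitution is legitimate, mapping $\prescript{}{\varrho}{\HI}$ onto the positive branch $\{t' \ge 0\}$ of $\HI$: the constraints $\varrho \le \abs{t} \le \sqrt{\varrho^2 + 1}$ and $\norm{x}^2 \le t^2 - \varrho^2$ translate into $0 \le t' \le 1$ and $\norm{x}^2 \le t'^2$. On this branch $\sign(s't') = 1$, so the three quantities entering the bound of Theorem \ref{T2} at $(x, t')$ and $(y, s')$ read as
\begin{equation*}
    I_2 = \sqrt{1 - t'^2}\sqrt{1 - s'^2} = I_2^{\varrho}, \qquad I_3 = \sqrt{t'^2 - \norm{x}^2}\sqrt{s'^2 - \norm{y}^2} = \abs{I_3^{\varrho}}, \qquad I_1 = \inner{x, y},
\end{equation*}
matching the conventions used in the corollary's statement. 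The $\sign(st)$ ambiguity across the two sheets of $\prescript{}{\varrho}{\HI}$ is absorbed by the even symmetry of the reproducing kernel in each of the arguments $t$ and $s$ separately, so nothing is lost by reducing to the positive branch.

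Inserting these identifications into the estimate of Theorem \ref{T2} produces exactly the claimed bound: the prefactor $\tau^{-d/2 - 1/2}$, the correction factor $(\tau + \pi - \arccos(I_1 + I_2^{\varrho} + I_3^{\varrho}))^{-\gamma - \mu - d/2}$, the singular factors $(I_2^{\varrho} \vee \tau)^{-\gamma}$ and $(I_3^{\varrho} \vee \tau)^{-\mu}$, and the Gaussian factor $\exp\{-\arccos^2(I_1 + I_2^{\varrho} + I_3^{\varrho})/(4\tau)\}$ transport unchanged. The regime $\tau \in (1, \infty)$ is inherited directly, since Theorem \ref{T2} already yields comparability to $1$ uniformly in the spatial variables and the identity above is $\tau$-uniform. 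The whole argument is essentially mechanical once that identity is in hand; the only point requiring attention is the consistent bookkeeping of the $\sign(st)$ conventions carried by $I_1$ and $I_3^{\varrho}$, which is resolved by the symmetry argument indicated above, and no new analytical input is needed beyond Theorem \ref{T2}.
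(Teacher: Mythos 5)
Your proposal takes exactly the paper's route: the paper proves Corollary~\ref{C2} in one stroke, by reading \eqref{RK4} as the cone kernel \eqref{RK2} evaluated at the transplanted arguments $t' = \sqrt{t^2 - \varrho^2}$, $s' = \sqrt{s^2 - \varrho^2}$, summing term by term, and quoting Theorem~\ref{T2}; your domain verification and the large-time regime are handled as in the paper. Your repair of the exponent in Definition~\ref{HK4} is also correct and worth making explicit: as printed it reads $n(n + 2\gamma + d - 1)$, but it must be $n(n + 2\gamma + 2\mu + d)$ to agree with the stated spectrum of $\mathcal{D}^{E, \varrho}_{\gamma, \mu}$ and with Definition~\ref{HK2}, and without this the term-by-term identification with $h^E_\tau$ on $\HI$ would not go through.

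The one step that does not hold up as written is your symmetry claim. You assert that the $\sign(st)$ ambiguity is ``absorbed by the even symmetry of the reproducing kernel in each of the arguments $t$ and $s$ separately,'' but the closed formula \eqref{RK2} does not have this symmetry: flipping $s$ alone sends $I_1 \mapsto -I_1$ and $I_3 \mapsto -I_3$, and after the substitutions $u \mapsto -u$, $v \mapsto -v$ together with the parity $Z_n^{\lambda + \frac{1}{2}}(-z) = (-1)^n Z_n^{\lambda + \frac{1}{2}}(z)$, the kernel given by \eqref{RK2} acquires a factor $(-1)^n$; it is invariant only under the simultaneous flip $(y, s) \mapsto (-y, -s)$. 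What actually makes your reduction work is more mundane: in \eqref{RK4} the arguments are the primed variables $t', s' \ge 0$, so $\sign(s' t') \in \{0, 1\}$ and no negative sign ever enters, which is precisely your identification $I_1 = \inner{x, y}$, $I_3 = \abs{I_3^{\varrho}}$. But note that this matches Corollary~\ref{C2} as stated only when $\sign(st) \ge 0$: the corollary carries $\sign(st)$ of the original hyperboloid variables inside $I_1$ and $I_3^{\varrho}$, and for points on opposite sheets the signed and unsigned bounds are not comparable (take $x = y$ with $\inner{x,y}$ near $1$; one Gaussian factor is $\simeq 1$, the other $\simeq \me^{-\pi^2/(4\tau)}$). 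The clean fix is to read the change of variables as sign-preserving, $t \mapsto \sign(t)\sqrt{t^2 - \varrho^2}$, mapping the two sheets of $\prescript{}{\varrho}{\HI}$ onto the two nappes of $\HI$; then $\sign(s' t') = \sign(st)$ and Theorem~\ref{T2} transplants verbatim into the corollary's signed form, with no symmetry argument needed. You should either adopt that convention or state explicitly which signs your $I_1$ and $I_3^{\varrho}$ carry; the paper itself glosses over this point, and your evenness claim, as formulated, is the one assertion in the proposal that is false.
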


\subsection{Jacobi Heat Kernel on the Surface of the Paraboloid.}

The content of this section is derived from \cite[Section~3]{Xu23}. Fix $d \in \{2, 3, \ldots\}$ and consider the domain
\begin{equation}
    \label{def5}
    \widetilde{{\mathbb{V}}}_{0}^{d + 1} \coloneqq \big\{(x, t) \in \mathbb{R}^{d} \times \mathbb{R} : \norm{x}^2 = t, \, 0 \le t \le 1 \big\}.
\end{equation}

We equip \eqref{def5} with the weight function 
\begin{equation*}
    \widetilde{w}_{\beta, \gamma} = t^\beta(1 - t)^\gamma, \qquad \beta > -\frac{d+1}{2}, \quad \gamma > -1.
\end{equation*}

For each $n \in \{0, 1, 2, \ldots \}$, we define the space $\mathcal{V}_n(\widetilde{w}_{\beta, \gamma})$ of orthogonal polynomials of degree $n$ on the surface $\widetilde{\mathbb{V}}_{0}^{d + 1}$. Each element of this space can be written in the form
\begin{equation*}
    Q^n_{m, l}(x, t) = P^{\beta + m + \frac{d-1}{2}, \gamma}_{n-m}(1 -2t)t^{\frac{m}{2}} Y_l^m\bigg(\frac{x}{\sqrt{t}}\bigg), \qquad 0 \le m \le n, \quad 1 \le l \le L_m^d,
\end{equation*}
where $P^{\alpha, \beta}_n$ is a Jacobi polynomial, $L_m^d \coloneqq \binom{m + d - 1}{m} - \binom{m + d - 3}{m - 2}$, and $\{ Y_l^m: 1 \le l \le L_m^d \}$ spans the space of spherical harmonics of degree $m$ in $d$ variables, see \cite[Proposition~3.1]{Xu23}.

Moreover, there exists a second-order differential operator $\widetilde{D}_\gamma$ acting on an appropriate subspace of $\mathcal L^2(\widetilde{{\mathbb{V}}}_{0}^{d + 1}, \widetilde{c}_{\beta=-\frac{1}{2}, \gamma} \widetilde{w}_{\beta=-\frac{1}{2}, \gamma})$, where $\widetilde{c}_{\beta=-\frac{1}{2}, \gamma}$ is the corresponding normalizing constant. The associated eigenvalues are $-n (n + \gamma + \frac{d}{2}) + m(n + \frac{\gamma + d - 1}{2})$.

For the sake of completeness we recall that $\widetilde{D}_\gamma$ is given by 
\begin{equation*}
    t(1 - t) \partial_t^2 + \frac{d}{2} \partial_t - \bigg(\gamma + \frac{d}{2} + 1 \bigg) t \partial_t + \frac{1 - t}{4t} \Delta_0^{(\xi)},
\end{equation*}
where $\Delta_0^{(\xi)}$ denotes the Laplace–Beltrami operator acting in the direction of the spherical variable $\xi \coloneqq \frac{x}{\sqrt{t}}$, see \cite[Proposition~3.2]{Xu23}.

\begin{observation}
    \label{O1}
    We cannot establish sharp estimates for Jacobi heat kernel on $\widetilde{{\mathbb{V}}}_{0}^{d + 1}$ related to $\widetilde{D}_\gamma$ using known techniques presented in this paper. This is due to the fact that the associated eigenvalues depend on both $n$ and $m$, hence $\mathcal{V}_n(\widetilde{w}_{\beta=-\frac{1}{2}, \gamma})$ are not eigenspaces of $\widetilde{D}_\gamma$.
\end{observation}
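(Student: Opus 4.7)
The plan is to verify that $\widetilde{D}_\gamma$ fails the defining hypothesis of property~\eqref{item1} in the introduction, so that the reproducing-kernel-plus-Nowak--Sj\"ogren--Szarek scheme underpinning Theorems~\ref{T1} and~\ref{T2} has no direct analogue in this setting. First, I would fix $n \in \{1, 2, \ldots\}$ and record the eigenvalue of $\widetilde{D}_\gamma$ on $Q^n_{m, l}$ as
\[
\lambda_{n, m} \coloneqq -n\bigl(n + \gamma + \tfrac{d}{2}\bigr) + m\bigl(n + \tfrac{\gamma + d - 1}{2}\bigr).
\]
Since $n \ge 1$, $d \ge 2$ and $\gamma > -1$, the coefficient $n + \tfrac{\gamma + d - 1}{2}$ is strictly positive, so $\lambda_{n, 0} \ne \lambda_{n, 1}$. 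By \cite[Proposition~3.1]{Xu23} both of these values of $m$ actually occur inside $\mathcal{V}_n(\widetilde{w}_{\beta=-\frac{1}{2}, \gamma})$, and we conclude that this degree subspace contains eigenfunctions with distinct eigenvalues and hence is not an eigenspace of $\widetilde{D}_\gamma$.

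Next, I would spell out the consequence for the heat semigroup generated by $\widetilde{D}_\gamma$. The natural spectral sum is $\sum_{n, m} \me^{-\tau \lambda_{n, m}} R_{n, m}(x, y)$, where $R_{n, m}$ is the reproducing kernel of the joint eigenspace $\mathrm{span}\{Q^n_{m, l} : 1 \le l \le L_m^d\}$, rather than a single series in the reproducing kernels of the spaces $\mathcal{V}_n(\widetilde{w}_{\beta=-\frac{1}{2}, \gamma})$. This is precisely the mismatch between the grading by degree and the grading by eigenvalue that is absent in every previous subsection of this paper, where $\mathcal{V}_n$ itself carried a single eigenvalue and the reproducing kernel $P_n$ could be inserted directly into the heat series.

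Finally, I would observe that the reindexed kernels $R_{n, m}$ inherit the spherical-harmonic factor $Y^m_l(x/\sqrt{t})$ which, unlike the Gegenbauer collapse behind~\eqref{Gegen}, cannot be absorbed into a single one-variable Jacobi reproducing mechanism; consequently no integral representation of the type~\eqref{RK1}--\eqref{RK4} is offered in \cite{Xu23}. The main obstacle is thus not a missing estimate to be repaired but the absence of any closed-form substitute for the integral kernel representation that drove the analysis in the earlier sections, so the techniques of the present paper cannot be applied as they stand, which is precisely the content of the observation.
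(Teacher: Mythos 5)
Your proposal is correct and takes essentially the same route as the paper: Observation~\ref{O1} carries its own justification, namely that the eigenvalues $-n\bigl(n+\gamma+\tfrac{d}{2}\bigr)+m\bigl(n+\tfrac{\gamma+d-1}{2}\bigr)$ depend on $m$, so the degree spaces $\mathcal{V}_n(\widetilde{w}_{\beta=-\frac{1}{2},\gamma})$ are not eigenspaces of $\widetilde{D}_\gamma$ and property~\eqref{item1} of the general framework fails, blocking the heat-series-plus-closed-kernel scheme of Theorems~\ref{T1} and~\ref{T2}. Your explicit verification that the coefficient $n+\tfrac{\gamma+d-1}{2}$ is strictly positive for $n\ge 1$, $d\ge 2$, $\gamma>-1$ (so $\lambda_{n,0}\ne\lambda_{n,1}$, with both values of $m$ realized by \cite[Proposition~3.1]{Xu23}), together with your remarks on the doubly indexed spectral sum and the absence of any analogue of \eqref{RK1}--\eqref{RK4}, merely makes the paper's assertion precise rather than departing from it.
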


\subsection{Jacobi Heat Kernel on the Solid Paraboloid.}

The content of this section is derived from \cite[Section~4]{Xu23}. Fix $d \in \{2, 3, \ldots\}$ and consider the domain
\begin{equation}
    \label{def6}
    \widetilde{\mathbb{V}}^{d + 1} \coloneqq \big\{(x, t) \in \mathbb{R}^{d} \times \mathbb{R} : \norm{x}^2 \le t, \, 0 \le t \le 1 \big\}.
\end{equation}

We equip \eqref{def6} with the weight function 
\begin{equation*}
    \widetilde{w}_{\beta, \gamma, \mu} = t^\beta(1 - t)^\gamma(t - \norm{x}^2)^{\mu - \frac{1}{2}}, \qquad \beta > -\frac{d+1}{2}, \quad \gamma > -1, \quad \mu > -\frac{1}{2}.
\end{equation*}

For each $n \in \{0, 1, 2, \ldots \}$, we define the space $\mathcal{V}_n(\widetilde{w}_{\beta, \gamma, \mu})$ of orthogonal polynomials of degree $n$ on the solid $\widetilde{\mathbb{V}}^{d + 1}$. Each element of this space can be written in the form
\begin{equation*}
    Q^n_{m, k}(x, t) = P^{\beta + \mu + m + \frac{d-1}{2}, \gamma}_{n-m}(1 -2t)t^{\frac{m}{2}} P^m_{\bf k} \bigg(\frac{x}{\sqrt{t}}\bigg), \qquad 0 \le m \le n, \quad \abs{{\bf{k}}} = m,
\end{equation*}
where $P^{\alpha, \beta}_n$ is a Jacobi polynomial and $\{ P^m_{\bf k}: {\bf k} \in \{0, 1, 2, \ldots\}^d, \, \abs{{\bf k}} = m \}$ is the so-called orthonormal basis with parity related to the $d$-dimensional ball, see \cite[Proposition~4.1]{Xu23}.

Moreover, there exists a second-order differential operator $\widetilde{D}_{\gamma, \mu}$ acting on an appropriate subspace of $\mathcal L^2(\widetilde{\mathbb{V}}^{d + 1}, \widetilde{c}_{\beta=0, \gamma, \mu} \widetilde{w}_{\beta=0, \gamma, \mu})$, where $\widetilde{c}_{\beta=0, \gamma, \mu}$ is the corresponding normalizing constant. The associated eigenvalues are $-n (n + \mu + \gamma + \frac{d+1}{2}) + m(n + \mu + \frac{\gamma + d}{2})$.

For the sake of completeness we recall that $\widetilde{D}_{\gamma, \mu}$ is given by 
\begin{align*}
     t(1 - t) \partial_t^2 + (1 - t) \langle x, \nabla_x \rangle \partial_t + \frac{1}{4} (1 - t) \Delta_x + \bigg(\mu + \frac{d + 1}{2} \bigg) (1 - t) \partial_t - \frac{\gamma + 1}{2} \bigg(2t \partial_t + \langle x, \nabla_x \rangle\bigg),
\end{align*}
see \cite[Proposition~4.2]{Xu23}.

\begin{observation}
    \label{O2}
    We cannot establish sharp estimates for Jacobi heat kernel on $\widetilde{\mathbb{V}}^{d + 1}$ related to $\widetilde{D}_{\gamma, \mu}$ using known techniques presented in this paper. This is due to the fact that the associated eigenvalues depend on both $n$ and $m$, hence $\mathcal{V}_n(\widetilde{w}_{\beta=0, \gamma, \mu})$ are not eigenspaces of $\widetilde{D}_{\gamma, \mu}$.
\end{observation}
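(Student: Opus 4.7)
The plan is to verify the claim by making explicit the way in which the eigenvalue structure of $\widetilde{D}_{\gamma, \mu}$ obstructs the method used to prove Theorems~\ref{T1} and~\ref{T2}. The first step is to recall that the whole framework outlined in the Introduction hinges on property~\eqref{item1}: the eigenvalue $\lambda_n^{\mathcal D}$ of the diffusion operator $\mathcal D$ must depend only on the degree $n$, so that the entire space $\mathcal V_n(\varpi)$ is an eigenspace and the heat semigroup acts as scalar multiplication by $\me^{-\tau \lambda_n^{\mathcal D}}$ on each $\mathcal V_n(\varpi)$. This is what allows one to write $h_\tau(\varpi; x, y) = \sum_n \me^{-\tau \lambda_n^{\mathcal D}} P_n(\varpi; x, y)$ with $P_n$ the reproducing kernel of $\mathcal V_n(\varpi)$, and then to integrate the closed-form Gegenbauer representation of $P_n$ in the manner of \cite{NSS21}.

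Next I would make the obstruction concrete. By \cite[Proposition~4.2]{Xu23}, the basis polynomials $Q^n_{m, \mathbf{k}}$ spanning $\mathcal V_n(\widetilde{w}_{\beta = 0, \gamma, \mu})$ are eigenfunctions of $\widetilde{D}_{\gamma, \mu}$ with eigenvalue
\begin{equation*}
    \Lambda_{n, m} \coloneqq -n\bigg(n + \mu + \gamma + \tfrac{d + 1}{2}\bigg) + m\bigg(n + \mu + \tfrac{\gamma + d}{2}\bigg).
\end{equation*}
For any fixed $n \ge 1$ the values $m = 0$ and $m = 1$ are both admissible, and
\begin{equation*}
    \Lambda_{n, 1} - \Lambda_{n, 0} = n + \mu + \tfrac{\gamma + d}{2} > 0
\end{equation*}
under the admissible parameter range $\gamma > -1$, $\mu > -\frac{1}{2}$, $d \ge 2$. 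Hence two basis polynomials $Q^n_{0, \mathbf{k}}$ and $Q^n_{1, \mathbf{k}'}$ inside the same $\mathcal V_n$ are eigenfunctions with different eigenvalues, so $\mathcal V_n(\widetilde{w}_{\beta = 0, \gamma, \mu})$ is not contained in any single eigenspace of $\widetilde{D}_{\gamma, \mu}$. This is exactly the assertion of the observation.

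Finally I would explain why this algebraic failure is the genuine obstacle, not merely a cosmetic one. The only natural way to retain a spectral decomposition of the heat semigroup is to refine the expansion, writing the candidate heat kernel as $\sum_{n, m} \me^{-\tau \Lambda_{n, m}} \widetilde{P}_{n, m}$, where $\widetilde{P}_{n, m}$ would be the reproducing kernel of the subspace of $\mathcal V_n(\widetilde{w}_{\beta = 0, \gamma, \mu})$ spanned by $\{Q^n_{m, \mathbf{k}}\}_{\mathbf{k}}$. The sharp-estimates technique imported from \cite{NSS18, NSS21} and used for Theorems~\ref{T1} and~\ref{T2} relies crucially on the availability of a closed, Gegenbauer-type integral representation of the reproducing kernel of an entire $\mathcal V_n$, which allows one to interchange summation in $n$ with integration and to recognize the resulting series as a classical one-dimensional Jacobi heat kernel to which the bounds of \cite{NSS21} apply. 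No closed-form expression of comparable shape is known for the partial kernels $\widetilde{P}_{n, m}$, and because the exponential weight $\me^{-\tau \Lambda_{n, m}}$ depends genuinely on $m$, the double sum cannot be collapsed into a one-parameter series amenable to such treatment. This is the hard part, and it justifies the observation: the methods developed in this paper cannot be pushed through in the parabolic solid setting.
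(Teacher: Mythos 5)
Your argument is correct and takes essentially the same route as the paper: Observation~\ref{O2} carries no separate proof, its justification being precisely the remark that the eigenvalues $-n\big(n + \mu + \gamma + \frac{d+1}{2}\big) + m\big(n + \mu + \frac{\gamma + d}{2}\big)$ from \cite[Proposition~4.2]{Xu23} depend genuinely on $m$, so the spaces $\mathcal{V}_n(\widetilde{w}_{\beta=0, \gamma, \mu})$ fail to be eigenspaces of $\widetilde{D}_{\gamma, \mu}$ and property~\eqref{item1} of the framework breaks down. Your explicit verification that $\Lambda_{n,1} - \Lambda_{n,0} = n + \mu + \frac{\gamma + d}{2} > 0$ for $n \ge 1$ in the admissible parameter range, together with the discussion of why the resulting double sum cannot be collapsed into a one-parameter Jacobi-type series, simply makes the paper's one-line reasoning concrete rather than constituting a different approach.
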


\subsection{Summary of the results.}

\begin{center}
    \begin{table}[h!]
    \captionsetup{font=small}
    \caption{Summary of the obtained results.}
    \renewcommand{\arraystretch}{1.5}
    \begin{tabular}{c || c c } 
        \label{Table1}
         Space & Even Jacobi Heat Kernel & Odd Jacobi Heat Kernel \\
         \hline
         \hline
         $\HS$ & Available for $\beta=0$ & Available for $\beta=-1$ \\
         \hline
         $\HI$ & Available for $\beta=\frac{1}{2}$ & Available for $\beta=-\frac{1}{2}$ \\
         \hline
         $\prescript{}{\varrho}{\HS}$ & Available for $\beta=0$ & Not Available\textsuperscript{1} \\
         \hline
         $\prescript{}{\varrho}{\HI}$ & Available for $\beta=\frac{1}{2}$ & Not Available\textsuperscript{1} \\
         \hline
         $\widetilde{{\mathbb{V}}}_{0}^{d + 1}$ & Not Available\textsuperscript{2} & Not Available\textsuperscript{2} \\
         \hline
         $\widetilde{{\mathbb{V}}}_{0}^{d + 1}$ & Not Available\textsuperscript{2} & Not Available\textsuperscript{2} \\ [1ex]     
    \end{tabular}
    \vspace{2mm}
    \caption*{\textsuperscript{1}Closed-form formula is not available.\\\textsuperscript{2}Polynomials with a fixed degree do not span eigenspaces.}
    \end{table}
\end{center}

\subsection{Comments} A couple of remarks are in order.
\begin{enumerate}[label=(\alph*)] 
	\item The resulting estimates are genuinely sharp, representing one of the rare cases where this degree of accuracy has been accomplished.
	\item Multidimensional double cones and multidimensional hyperbolas combine geometric features of both intervals and Euclidean balls, crucial for analyzing the corresponding Jacobi framework. For both settings, when $d=1$, the geometry simplifies considerably since the Euclidean component is absent, and thus, this scenario is not covered in this work. For more information, in this case, we refer to \cite[Section~2]{Xu21}.   
	\item The parameters $\beta$, from the factors $t^\beta$ in the weights are set to specific values, see Table \ref{Table1}, because only then the subspaces generated by the Jacobi polynomials of degree $n$ become eigenspaces of the related differential operators, see \cite[Theorems~4.2~and~4.4]{Xu21}. The parameter $\gamma$ is restricted in accordance to the ranges in which useful, i.e. closed,  formulas for the associated reproducing kernels hold, see \cite[Theorems~5.1~and~5.5]{Xu21}.   
	\item We focus only on $\tau \in (0, 1]$ because the uniform estimates for $\tau \in (1, \infty)$ are known.
	\item The idea behind Theorems~\ref{T1}~and~\ref{T2} (and thereby all Corollaries) is to express the studied Jacobi heat kernels in terms of the heat kernel related to the classical Jacobi expansions on $[-1,1]$, see Lemmas~\ref{HKG1}~and~\ref{HKG2}. In principle, we follow the strategy proposed in \cite{NSS21}.
\end{enumerate}

% \subsection*{Acknowledgments} The author wishes to express gratitude to Adam Nowak for bringing this topic to their attention and to Dariusz Kosz for his immense patience during numerous discussions on related subjects. Additionally, sincere thanks are extended to Paweł Sztonyk for his continued support.

% The author is also indebted to the anonymous referees for their valuable remarks which led to a significant improvement in the presentation of the results.  

\section{Technical preparation} In this section, we compile some supporting results that will be instrumental in proving the main theorems. We begin by providing an alternative representation of the Jacobi heat kernels, adapting the approach from \cite{NSS21}. The core objective is to eliminate the oscillatory behavior of Jacobi polynomials, transforming them into a positive expression that is significantly more manageable for analysis.

Given $\lambda \in [0, \infty)$, we recall the formula for the heat kernel $G_{\tau }^{\lambda, \lambda}$ associated with the classical Jacobi polynomials $P_n^{\lambda,\lambda}$ on $[-1,1]$, when the first argument equals $1$. For $\tau \in (0, \infty)$, we have
\begin{equation*}
G_{\tau }^{\lambda, \lambda}(1, w) \coloneqq \sum \limits_{n=0}^{\infty} e^{-\tau  \lambda_{n}} \frac{P_n^{\lambda, \lambda}(1) P_n^{\lambda, \lambda}(w)}{h_n^{\lambda, \lambda}}
= \sum \limits_{n=0}^{\infty} e^{-\tau  \lambda_{n}} Z_n^{\lambda + \frac{1}{2}}(w),
\end{equation*}
where $\lambda_{n} \coloneqq n(n + 2\lambda + 1)$ and $Z_n^{\lambda}$ is defined in \eqref{Gegen}.

The heat kernel from Definition~\ref{HK1} can be described in terms of $G_{\tau }^{\lambda, \lambda}$. 

\begin{lemma}
	\label{HKG1}
	For each $\tau \in (0,\infty)$, the Jacobi heat kernel from Definition~\ref{HK1} can be given by
	\begin{equation*}
	    c_{\gamma - \frac{1}{2}} \int \limits_{-1}^{1} G_\tau^{\gamma + \frac{d}{2} - 1, \gamma + \frac{d}{2} -1}\big(1, \xi(x, t, y, s; v) \big) \measure{\gamma}{v}.
	\end{equation*}	
\end{lemma}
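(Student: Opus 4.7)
The plan is to substitute the closed-form representation \eqref{RK1} for the reproducing kernel $P_n^E$ directly into the series from Definition~\ref{HK1}, interchange the order of summation and integration, and recognize the resulting pointwise inner sum as the Jacobi heat kernel $G_\tau^{\lambda,\lambda}(1,\cdot)$ evaluated at $\xi(v)$, where $\lambda$ is chosen so that all parameters are synchronized.

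The matching of parameters is the heart of the argument. Setting $\lambda \coloneqq \gamma + \tfrac{d}{2} - 1$, one immediately checks that $2\lambda + 1 = 2\gamma + d - 1$ and $\lambda + \tfrac{1}{2} = \gamma + \tfrac{d-1}{2}$. Consequently, the eigenvalue sequence $n(n+2\gamma+d-1)$ from Definition~\ref{HK1} agrees with the Jacobi eigenvalue sequence $\lambda_n = n(n+2\lambda+1)$ appearing in $G_\tau^{\lambda,\lambda}(1,\cdot)$, and the superscript $\gamma + \tfrac{d-1}{2}$ carried by $Z_n$ in \eqref{RK1} matches $\lambda + \tfrac{1}{2}$. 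After the interchange, the inner series is therefore, term by term, exactly the series defining $G_\tau^{\gamma + \frac{d}{2} - 1,\, \gamma + \frac{d}{2} - 1}(1,\xi(v))$; the factor $c_{\gamma - \frac{1}{2}}$ in the conclusion of the lemma accounts for the normalizing constant relating the convention used in \eqref{RK1} with the convention used for the classical Jacobi heat kernel on $[-1,1]$.

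The only genuine technical point is justifying the term-by-term interchange. For this I would use the classical monotonicity bound $|P_n^{\lambda, \lambda}(w)| \le P_n^{\lambda, \lambda}(1)$ for $w \in [-1,1]$ and $\lambda \ge -\tfrac{1}{2}$, which in view of \eqref{Gegen} yields the uniform estimate $|Z_n^{\lambda + \frac{1}{2}}(w)| \le Z_n^{\lambda + \frac{1}{2}}(1)$. Since $Z_n^{\lambda + \frac{1}{2}}(1)$ grows only polynomially in $n$ while $e^{-\tau n(n + 2\gamma + d - 1)}$ decays faster than any polynomial for each fixed $\tau > 0$, the series converges absolutely and uniformly in $v \in [-1,1]$, so Fubini's theorem (or dominated convergence) applies without difficulty. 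Once this has been checked, substituting \eqref{RK1}, swapping sum and integral, and identifying the resulting series with $G_\tau^{\lambda,\lambda}(1,\xi(v))$ completes the proof; there is essentially no further content beyond this bookkeeping.
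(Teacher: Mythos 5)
Your proposal is correct and follows essentially the same route as the paper's proof: substitute \eqref{RK1} into the series from Definition~\ref{HK1}, match the eigenvalues via $n(n+2\gamma+d-1)=n(n+2\lambda+1)$ with $\lambda=\gamma+\tfrac{d}{2}-1$, and interchange sum and integral by Fubini's theorem to identify the inner series with $G_\tau^{\lambda,\lambda}(1,\xi(v))$. The only difference is that you explicitly justify the interchange via the bound $\abs{Z_n^{\lambda+\frac{1}{2}}(w)}\le Z_n^{\lambda+\frac{1}{2}}(1)$ together with the Gaussian decay of the coefficients, a point the paper invokes without comment.
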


\begin{proof}
	By expanding left hand side of the formula from Definition~\ref{HK1} we get
	\begin{equation*}
	c_{\gamma - \frac{1}{2}} \sum \limits_{n=1}^{\infty} \me^{-\tau n (n + 2\gamma + d - 1)} \int \limits_{-1}^{1} Z_{n}^{\gamma + \frac{d-1}{2}} \big(\xi(x, t, y, s; v)\big) \measure{\gamma}{v}.
	\end{equation*}
	Note that $n (n + 2\gamma + d - 1) = n (n + 2 \lambda + 1)$, where $\lambda = \gamma + \frac{d}{2} - 1$. After the application of Fubini's theorem the proof is completed.
\end{proof}

\begin{lemma}
	\label{HKG2}
	For each $\tau \in (0,\infty)$, the Jacobi heat kernel from Definition~\ref{HK2} can be given by
	\begin{equation*}
	    c_{\gamma - \frac{1}{2}} c_{\mu - \frac{1}{2}} \int \limits_{-1}^{1} \int \limits_{-1}^{1} G_\tau^{\gamma + \mu + \frac{d}{2} - \frac{1}{2}, \gamma + \mu + \frac{d}{2} - \frac{1}{2}}\big(1, \xi(x, t, y, s; u, v) \big) \measure{\mu}{u} \measure{\gamma}{v}.
	\end{equation*}	
\end{lemma}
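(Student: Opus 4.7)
The plan is to follow verbatim the proof of Lemma~\ref{HKG1}, with the only change being that the reproducing kernel \eqref{RK2} on the solid double cone is presented as a double (rather than single) integral, so the computation lives one integration deeper. First I would substitute the closed formula \eqref{RK2} for $P_n^E\big(w_{\beta=\frac{1}{2},\gamma,\mu};(x,t),(y,s)\big)$ into the defining series from Definition~\ref{HK2}, writing
\begin{equation*}
h^E_\tau\big(w_{\beta=\frac{1}{2},\gamma,\mu};(x,t),(y,s)\big)
= c_{\gamma-\frac{1}{2}}\,c_{\mu-\frac{1}{2}}\sum_{n=1}^{\infty} \me^{-\tau n(n+2\gamma+2\mu+d)} \int_{-1}^{1}\!\!\int_{-1}^{1} Z_n^{\gamma+\mu+\frac{d}{2}}\big(\xi(u,v)\big)\, \measure{\mu}{u}\,\measure{\gamma}{v}.
\end{equation*}

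The key algebraic identification is then to match the parameters to the classical Jacobi heat kernel $G_\tau^{\lambda,\lambda}(1,\cdot)$. Setting $\lambda:=\gamma+\mu+\tfrac{d}{2}-\tfrac{1}{2}$, one has simultaneously $n(n+2\lambda+1)=n(n+2\gamma+2\mu+d)$ and $\lambda+\tfrac{1}{2}=\gamma+\mu+\tfrac{d}{2}$, so the integrand, together with the exponential factor, is exactly the general term of the series defining $G_\tau^{\lambda,\lambda}\big(1,\xi(u,v)\big)$ (noting that the $n=0$ term contributes a constant whose appearance or absence is absorbed by the normalization and can be handled precisely as in Lemma~\ref{HKG1}).

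The last step is to interchange the summation and the double integration by Fubini's theorem. This is justified by absolute convergence: for any fixed $\tau>0$ the factor $\me^{-\tau n(n+2\gamma+2\mu+d)}$ decays super-exponentially, while standard bounds on $Z_n^{\gamma+\mu+d/2}$ on $[-1,1]$ grow only polynomially in $n$, so the double sum-integral is finite in absolute value uniformly on the compact domain of integration. After the swap we obtain exactly the expression in the statement. I expect no genuine obstacle here; the proof is essentially a mechanical extension of Lemma~\ref{HKG1} to a two-variable integral representation, the only mild point being the bookkeeping of the two normalizing constants $c_{\gamma-\frac{1}{2}}$ and $c_{\mu-\frac{1}{2}}$.
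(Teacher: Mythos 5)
Your proposal follows essentially the same route as the paper's own proof: substitute the closed formula \eqref{RK2} into the series of Definition~\ref{HK2}, identify $\lambda = \gamma + \mu + \frac{d}{2} - \frac{1}{2}$ so that $n(n + 2\lambda + 1) = n(n + 2\gamma + 2\mu + d)$, and interchange the sum with the double integral via Fubini's theorem. Your additional remarks --- the absolute-convergence justification for Fubini and the handling of the $n=0$ term --- merely make explicit what the paper leaves implicit, so the argument is correct and nothing further is needed.
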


\begin{proof}
	By expanding left hand side of the formula from Definition~\ref{HK2} we get
	\begin{equation*}
	 c_{\gamma - \frac{1}{2}} c_{\mu - \frac{1}{2}} \sum \limits_{n=1}^{\infty} \me^{-\tau n (n + 2\gamma + 2\mu + d)} \int \limits_{-1}^{1} \int \limits_{-1}^{1} Z_{n}^{\gamma + \mu + \frac{d}{2}} \big(\xi(x, t, y, s; u, v)\big) \measure{\mu}{u} \measure{\gamma}{v}. 
	\end{equation*}
	Observe that $n (n + 2\gamma + 2\mu + d) = n (n + 2 \lambda + 1)$, where $\lambda = \gamma + \mu + \frac{d}{2} - \frac{1}{2}$. Fubini's theorem concludes the proof.
\end{proof}

The rest of this section is devoted to recalling various estimates obtained in previous research in the context of the Jacobi heat kernel on $\mathbb{B}^d$. The first one describes the behavior of the function $G_{\tau}^{\lambda, \lambda}$ (cf.~\cite[(7)]{NSS21}).

\begin{lemma}
	\label{LNSS1}
	Fix $\lambda \in [0, \infty)$. Then
	\begin{equation*}
	G_{\tau}^{\lambda, \lambda}(1, \cos \psi) \simeq \tau^{-\lambda- 1} ( \tau + \pi - \psi )^{-\lambda - \frac{1}{2}} \exp\bigg\{-\frac{\psi^2}{4\tau} \bigg\}
	\end{equation*}
	uniformly in $\tau \in (0, 1]$ and $\psi \in [0, \pi]$.
\end{lemma}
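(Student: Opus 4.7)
The statement is essentially \cite[(7)]{NSS21}; my plan is to follow the strategy developed there (and in \cite{NSS18}). The heart of the matter is to transform the oscillatory series defining $G_\tau^{\lambda,\lambda}(1,\cos\psi)$ into a manifestly positive integral amenable to Laplace-type analysis, since sharp two-sided bounds inevitably require positivity at some stage.

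First I would use that $P_n^{\lambda,\lambda}$ is proportional to the Gegenbauer polynomial $C_n^{\lambda+\frac{1}{2}}$ and invoke a classical integral representation (Dijksma--Koornwinder, or, for half-integer $\lambda$, the zonal formula on the corresponding sphere via Funk--Hecke) to rewrite $Z_n^{\lambda+\frac{1}{2}}(\cos\psi)$ as a weighted integral of $\cos(n\theta)$-type terms over a finite interval. After interchanging sum and integral, completing the square via $n(n+2\lambda+1) = (n+\lambda+\frac{1}{2})^2 - (\lambda+\frac{1}{2})^2$, and applying Poisson summation, the inner series becomes a rapidly convergent sum of Gaussians in $\theta/\sqrt{\tau}$. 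For $\tau \in (0,1]$ only the principal Gaussian contributes to leading order, so $G_\tau^{\lambda,\lambda}(1,\cos\psi)$ is comparable to a positive integral with a tractable, non-oscillatory integrand.

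Second, I would carry out Laplace asymptotics on this positive integral by splitting into the three regimes $\psi \le \sqrt{\tau}$, $\sqrt{\tau} \le \psi \le \pi - \sqrt{\tau}$, and $\psi \ge \pi - \sqrt{\tau}$. The Gaussian factor $\exp\{-\psi^2/(4\tau)\}$ is pinned down by the location of the minimum of the quadratic phase, the normalization $\tau^{-\lambda-1}$ arises from the Laplace Jacobian combined with the Gegenbauer weight $(1-w^2)^{\lambda}$, and the algebraic factor $(\tau + \pi - \psi)^{-\lambda - \frac{1}{2}}$ emerges from how this weight degenerates near the antipodal endpoint of integration. The main technical obstacle, as always in such sharp comparisons, is matching upper and lower constants: the upper bound is clean thanks to $|Z_n^{\lambda+\frac{1}{2}}(\cdot)| \le Z_n^{\lambda+\frac{1}{2}}(1)$ and positivity, but the lower bound forces one to exhibit an explicit subregion of integration of size comparable to the effective support on which the integrand is already bounded below by the predicted expression, uniformly across all three regimes and in particular in the delicate range $\psi \to \pi$ where the polynomial correction is detected.
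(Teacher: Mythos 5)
A preliminary point: the paper does not actually prove Lemma~\ref{LNSS1} --- it is recalled verbatim from \cite[(7)]{NSS21}, so the ``paper's proof'' is a citation, and your blind attempt amounts to reproving the main one-endpoint estimate of \cite{NSS21}. Your outline does capture the general philosophy of that literature (de-oscillation of the series, Gaussian image expansion, Laplace analysis in the three regimes $\psi\le\sqrt{\tau}$, $\sqrt{\tau}\le\psi\le\pi-\sqrt{\tau}$, $\psi\ge\pi-\sqrt{\tau}$), and your heuristics for the origin of the factors $\tau^{-\lambda-1}$ and $(\tau+\pi-\psi)^{-\lambda-\frac{1}{2}}$ are sound. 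However, the central step of your plan has a genuine gap. Poisson summation turns $\sum_n \me^{-\tau(n+\lambda+\frac{1}{2})^2}\cos\big((n+\lambda+\tfrac{1}{2})\theta\big)$ into a rapidly convergent sum of Gaussians only when the sum runs over a full lattice with the appropriate symmetry; the heat series runs over $n\ge 0$, and the summand extends symmetrically across the lattice only for the half-integer (spherical) values of $\lambda$ --- which is exactly why \cite{NSS18} could treat the sphere directly, why \cite{NSS21} reaches general real parameters by a different route (reduction formulas resting on the spherical base cases), and why \cite{NSS24} later had to develop an extension of the Dijksma--Koornwinder formula to get such positive integral representations in general. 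For $\lambda\notin\frac{1}{2}\mathbb{N}$ your step ``apply Poisson summation'' is simply not available as stated, and the ``manifestly positive integral'' is asserted rather than obtained: after the Gegenbauer--Laplace representation the inner sum is a theta-type series with complex argument $\cos\psi + i\sin\psi\cos\phi$ and polynomial coefficients, and taming its oscillation is the whole difficulty, not a routine interchange.

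Two further concrete problems. First, the claim that ``only the principal Gaussian contributes to leading order'' fails uniformly: when $\pi-\psi\lesssim\sqrt{\tau}$ the image centered at $2\pi-\psi$ is of the same size as the principal one, and it is precisely the interplay of these images with the degenerating weight $(1-w^2)^{\lambda}$ that produces the factor $(\tau+\pi-\psi)^{-\lambda-\frac{1}{2}}$; discarding the images loses the lemma in exactly the regime you yourself flag as delicate. Second, your ``clean'' upper bound via $\abs{Z_n^{\lambda+\frac{1}{2}}(w)}\le Z_n^{\lambda+\frac{1}{2}}(1)$ yields only $G_\tau^{\lambda,\lambda}(1,\cos\psi)\lesssim\tau^{-\lambda-1}$ with no Gaussian decay, which is adequate only for $\psi\lesssim\sqrt{\tau}$, so the upper estimate is not settled by positivity plus a termwise bound. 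In short: the three-regime Laplace analysis at the end is fine in spirit, but the de-oscillation step on which everything rests would fail for general $\lambda\in[0,\infty)$, and a correct completion would have to fall back on the actual machinery of \cite{NSS18, NSS21} (or \cite{NSS24}) rather than the sketched Poisson-summation shortcut.
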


\noindent We shall use the following elementary estimate (cf.~\cite[(9)]{NSS21}).

\begin{lemma}
	\label{LNSS2}
	Fix $\kappa \in \RR$. Then 
	\begin{equation*}
	(\tau + \pi - \eta)^{\kappa} \exp \bigg\{ - \frac{\eta^2}{\tau} \bigg\} \lesssim (\tau + \pi - \theta)^{\kappa} \exp \bigg\{ - \frac{\theta^2}{\tau} \bigg\}
	\end{equation*}
	uniformly in $\tau \in (0, \infty)$ and $\theta, \eta \in [0,\pi]$ with $\theta \leq \eta$.
\end{lemma}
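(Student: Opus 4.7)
The plan is to split on the sign of $\kappa$ and then reduce the remaining case to a one-variable estimate by a clean change of variables.

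If $\kappa \geq 0$, both factors $(\tau + \pi - \cdot)^{\kappa}$ and $\exp(-\,(\cdot)^{2}/\tau)$ are non-increasing in their argument, so $\theta \leq \eta$ gives the inequality with constant $1$. The interesting case is $\kappa < 0$. I would set $|\kappa| = -\kappa$, $a \coloneqq \tau + \pi - \eta$, $b \coloneqq \tau + \pi - \theta$ and $\delta \coloneqq \eta - \theta \geq 0$, so that $b = a + \delta$ and $a \geq \tau$ (from $\eta \leq \pi$). Using $\eta^{2} - \theta^{2} = \delta(\eta + \theta)$, the claim reduces to bounding
\[
    R \coloneqq \bigg(\frac{b}{a}\bigg)^{|\kappa|} \exp\bigg\{-\frac{\delta(\eta+\theta)}{\tau}\bigg\} \lesssim 1
\]
uniformly in the parameters. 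I would then split on the size of $\delta$ relative to $a$.

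If $\delta \leq a$, then $b/a \leq 2$ and the exponential factor is $\leq 1$, so $R \leq 2^{|\kappa|}$. If $\delta > a$, then combining with $\delta \leq \eta$ forces $2\eta \geq \eta + \delta > a + \eta = \tau + \pi$, whence $\eta > (\tau + \pi)/2 \geq \pi/2$ and in particular $\eta + \theta \geq \pi/2$. Setting $s \coloneqq \delta/a \geq 1$ and using $a \geq \tau$ to get $\delta/\tau \geq s$, this regime gives
\[
    R \leq (1+s)^{|\kappa|} \exp\bigg\{-\frac{\pi s}{2}\bigg\},
\]
which is bounded uniformly in $s \geq 1$ because exponential decay dominates polynomial growth. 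Combining the two sub-cases yields the lemma.

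The only step with any real content is the implication $\delta > a \Rightarrow \eta \geq \pi/2$: it says that the only regime in which the polynomial factor $b/a$ can blow up is the one in which $\eta$ is forced close to $\pi$, so the Gaussian factor is automatically present to absorb the polynomial growth. Everything else is bookkeeping, which is consistent with the lemma being cited as an elementary estimate from \cite{NSS21}.
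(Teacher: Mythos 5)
Your proof is correct. One structural remark first: the paper does not actually prove this lemma --- it is recalled as an ``elementary estimate'' with the citation \cite[(9)]{NSS21}, and no argument is given in the source, so there is no in-paper proof to compare against; your write-up supplies the missing verification. Checking it in detail: the case $\kappa \ge 0$ is indeed immediate by monotonicity of both factors; for $\kappa < 0$ your reduction to $R = (b/a)^{|\kappa|}\exp\{-\delta(\eta+\theta)/\tau\} \lesssim 1$ with $a = \tau + \pi - \eta$, $b = a + \delta$, $\delta = \eta - \theta$ is exact (using $\eta^2 - \theta^2 = \delta(\eta+\theta)$), and the dichotomy works: when $\delta \le a$ you get $R \le 2^{|\kappa|}$ outright, and when $\delta > a$ the chain $2\eta \ge \eta + \delta > \eta + a = \tau + \pi$ correctly forces $\eta > \pi/2$, so $\eta + \theta > \pi/2$, while $a \ge \tau$ (valid since $\eta \le \pi$) gives $\delta/\tau \ge s \coloneqq \delta/a$, whence $R \le (1+s)^{|\kappa|}\me^{-\pi s/2}$, bounded uniformly over $s \ge 1$ for each fixed $\kappa$. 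Importantly, nothing in your argument restricts $\tau$, so the claimed uniformity over all of $(0,\infty)$ --- not just $(0,1]$ --- holds, and the implied constant depends only on $\kappa$, as the statement requires. Your closing observation is also the right way to see the mechanism: the polynomial ratio $b/a$ can only degenerate when $\eta$ is pushed above $\pi/2$, at which point the Gaussian factor is guaranteed to be active, which is precisely why the estimate is uniform in $\theta, \eta \in [0,\pi]$ rather than merely near the diagonal.
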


\noindent Moreover the following integral estimate will be required in the proof (cf. \cite[Lemma 2.1]{NSS21}).

\begin{lemma}
	\label{LNSS3}
	Fix $\nu \in [-\frac{1}{2}, \infty)$, $\eta \in \mathbb{R}$, and let $\Phi_{A, B}(w) \coloneqq \arccos(A + Bw)$. Then
	\begin{align*}
	&\int \limits_{0}^{1} \big(\pi - \Phi_{A, B}(w) + D\big)^{-\eta} \exp \bigg\{ -\frac{\Phi_{A, B}^2(w)}{D} \bigg\} \dint \Pi_{\nu}(w) \\
	&\qquad \simeq D^{\nu + \frac{1}{2}} \big(\pi - \Phi_{A, B}(1) + D\big)^{-\eta} \bigg( B\big(\pi - \Phi_{A, B}(1)\big)^{-1} + D \bigg)^{-\nu - \frac{1}{2}} \exp \bigg\{ -\frac{\Phi_{A, B}^2(1)}{D} \bigg\}.
	\end{align*}
	uniformly in $B \in [0,1]$, $A \in [-1, 1-B]$, and $D \in (0, \infty)$ (with $B (\pi -  \Phi_{A, B}(1))^{-1} = 0$ if $B=0$).
\end{lemma}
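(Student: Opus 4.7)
The plan is to follow the proof of Lemma~2.1 in \cite{NSS21}. Write $\psi_1 \coloneqq \arccos(A+B) = \Phi_{A,B}(1)$ for brevity. The Gaussian factor $\exp(-\Phi_{A,B}^2(w)/D)$ concentrates near $w = 1$, where $\Phi_{A,B}$ attains its minimum, so the strategy is to isolate the behavior of the Jacobi weight and the polynomial factor in a neighborhood of that endpoint.

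First, I would change variables via $r \coloneqq \Phi_{A,B}(w) - \psi_1$. The cosine subtraction identity gives $1 - w = 2\sin(\psi_1+r/2)\sin(r/2)/B$ and $dw = -\sin(\psi_1+r)\,dr/B$; combined with $1 + w \simeq 1$ on $[0,1]$, this yields
\[
(1-w^2)^{\nu-\frac{1}{2}}\,dw \simeq B^{-\nu-\frac{1}{2}}\bigl(\sin(\psi_1 + r/2)\sin(r/2)\bigr)^{\nu-\frac{1}{2}}\sin(\psi_1 + r)\,dr.
\]
Second, using $(\psi_1+r)^2 = \psi_1^2 + r(r+2\psi_1)$ together with Lemma~\ref{LNSS2} (applied with $\theta = \psi_1 \leq \eta = \psi_1+r$), I factor out the endpoint values $(\pi - \psi_1 + D)^{-\eta}\exp(-\psi_1^2/D)$. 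The residual exponential $\exp(-r(r+2\psi_1)/D)$ confines $r$ to a small effective range, on which $\sin(\psi_1+r/2) \simeq \sin(\psi_1+r) \simeq \sin\psi_1$ and $\sin(r/2) \simeq r$.

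The task then reduces to estimating
\[
J \coloneqq B^{-\nu-\frac{1}{2}}(\sin\psi_1)^{\nu+\frac{1}{2}}\int_0^R r^{\nu-\frac{1}{2}}\exp\bigl(-r(r+2\psi_1)/D\bigr)\,dr,
\]
where $R = \Phi_{A,B}(0) - \psi_1 \simeq B/\sin\psi_1$. The inner integral is evaluated by comparing three competing scales: the Gaussian width $\sqrt{D}$, the exponential decay scale $D/\psi_1$, and the integration cutoff $R$. In each sub-regime the integral is of standard gamma type, and combining with the $B^{-\nu-\frac{1}{2}}(\sin\psi_1)^{\nu+\frac{1}{2}}$ prefactor yields $D^{\nu+\frac{1}{2}}(B(\pi-\psi_1)^{-1} + D)^{-\nu-\frac{1}{2}}$ after algebraic simplification. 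The lower bound requires restricting the integral to an appropriate subinterval of $r$ where the residual Gaussian is bounded below by a positive constant.

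The main obstacle is this case analysis together with the final identification. For $\psi_1 \in [\pi/2, \pi)$ one has $\sin\psi_1 \simeq \pi - \psi_1$, so the match is direct. For $\psi_1 \in [0, \pi/2]$, $\sin\psi_1 \simeq \psi_1$ is generally not comparable to $\pi - \psi_1$, but the $\sin\psi_1$-factors coming from the Jacobian, the approximation of $\sin(\psi_1+r)$, and the gamma integral cancel each other, leaving dependence only on $\pi - \psi_1$ (which is comparable to $\pi$ in this range). Tracking both upper and lower bounds uniformly through all these subregimes, and ensuring the algebraic simplifications are consistent at their boundaries, is the most delicate aspect of the argument.
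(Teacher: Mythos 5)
First, a point of comparison: the paper does not prove Lemma~\ref{LNSS3} at all. It is imported verbatim from \cite[Lemma~2.1]{NSS21} (note the ``cf.'' in the text), so the only benchmark is the proof in that source, which is exactly what you set out to reproduce. Your route is therefore the same as the relevant one, and your skeleton is faithful to it: the substitution $r = \Phi_{A,B}(w) - \psi_1$, the identities $1 - w = 2\sin(\psi_1 + r/2)\sin(r/2)/B$ and $dw = -\sin(\psi_1 + r)\,dr/B$, the exact factoring of $\exp\{-\psi_1^2/D\}$ via $(\psi_1+r)^2 = \psi_1^2 + r(r+2\psi_1)$, and the reduction to a gamma-type integral governed by the three scales $\sqrt{D}$, $D/\psi_1$, $R$ are all correct and are the core of the argument. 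One structural remark on the use of Lemma~\ref{LNSS2}: since the lemma asserts the Gaussian with the \emph{exact} constant $1/D$ on both sides, you may only spend comparability constants on the residual factor $\exp\{-cr(r+2\psi_1)/D\}$, never on $\exp\{-\psi_1^2/D\}$ itself; your algebraic splitting does permit this, but it should be said explicitly.

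Two of your intermediate comparabilities are, however, false as stated, both in the small-$\psi_1$ regime, and they sit precisely at the reduction you rely on. First, on the effective range $r \lesssim \sqrt{D} \wedge (D/\psi_1)$ one cannot replace $\sin(\psi_1 + r)$ and $\sin(\psi_1 + r/2)$ by $\sin\psi_1$ when $\psi_1 \lesssim \sqrt{D}$: at $\psi_1 = 0$ one has $\sin(\psi_1 + r) \simeq r$ while $\sin\psi_1 = 0$, so your displayed quantity $J$, with the prefactor $(\sin\psi_1)^{\nu+\frac12}$ pulled out and kernel $r^{\nu - \frac12}$, is not the correct reduced integral there; the uniform statement is $\sin\theta \simeq \theta(\pi - \theta)$ on $[0,\pi]$, leading to the two-parameter kernel $r^{\nu - \frac12}(\psi_1 + r)^{\nu + \frac12}$ (modulo $\pi$-side factors), which is what must be estimated against the three scales. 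The final answer still emerges (e.g., $\int_0^{\sqrt D} r^{2\nu} e^{-r^2/D}\,dr \simeq D^{\nu + \frac12}$ when $\psi_1 \lesssim \sqrt D \lesssim R$), but not via the formula you wrote. Second, $R \simeq B/\sin\psi_1$ fails when $\psi_1 \lesssim \sqrt{B}$: taking $A + B = 1$ gives $\psi_1 = 0$ and $R = \arccos(1-B) \simeq \sqrt{B}$, whereas $B/\sin\psi_1$ is infinite; the correct exact relation, from which all regimes follow, is $2\sin(\psi_1 + R/2)\sin(R/2) = B$. Since your closing paragraph explicitly treats $\sin\psi_1 \simeq \psi_1$ as the generic small-angle behavior, these degenerate regimes are not merely deferred bookkeeping but require replacing the stated reduction. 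Finally, for $\nu = -\frac12$ the measure $\Pi_{-\frac12}$ restricted to $[0,1]$ is $\tfrac12\delta_1$, so the density/substitution argument does not apply at all; that case is trivial (both sides equal $\tfrac12(\pi - \psi_1 + D)^{-\eta}\exp\{-\psi_1^2/D\}$ up to constants) but must be dispatched separately.
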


\begin{remark}
    \label{remark1}
    In fact, Lemma \ref{LNSS3} can be generalized to 
    \begin{align*}
	&\int \limits_{0}^{1} \big(\pi - \Phi_{A, B}(w) + cD\big)^{-\eta} \exp \bigg\{ -\frac{\Phi_{A, B}^2(w)}{D} \bigg\} \dint \Pi_{\nu}(w) \\
	&\qquad \simeq D^{\nu + \frac{1}{2}} \big(\pi - \Phi_{A, B}(1) + D\big)^{-\eta} \bigg( B\big(\pi - \Phi_{A, B}(1)\big)^{-1} + D \bigg)^{-\nu - \frac{1}{2}} \exp \bigg\{ -\frac{\Phi_{A, B}^2(1)}{D} \bigg\}.
	\end{align*}
    for any fixed $c > 0$. This observation is a straightforward consequence of the fact that for any fixed $c > 0$ and $\alpha \in \RR$, we have $(x + y)^\alpha \simeq (x + cy)^\alpha$ in the range $(x, y) \in [0, \infty) \times (0, \infty)$.
\end{remark}

\section{Proofs of the main results}

We are ready to prove Theorem \ref{T1} and \ref{T2}.

\begin{proof}[Proof of Theorem~\ref{T1}]
    Let us fix $(x, t), (y, s) \in \HS$ and $\tau \in (0, 1]$. We consider the formula for $h^E_\tau\big(w_{\beta=0, \gamma}; (x, t), (y, s)\big)$ from Lemma \ref{HKG1}. Using the estimate from Lemma \ref{LNSS1} we conclude that the integrand is comparable to 
    \begin{equation*}
        \tau^{-\gamma - \frac{d}{2}} \big(\tau + \pi - \arccos\xi(v) \big)^{-\gamma - \frac{d}{2} + \frac{1}{2}} \exp \bigg\{-\frac{\arccos^2\xi(v)}{4\tau}\bigg\}.
    \end{equation*}
    This formula is well defined, because $\xi(v) \in [-1, 1]$, as shown in Lemma \ref{Lemma: range-xi} from Appendix. 
    
    Moreover, note that $\xi(v)$ is linear with respect to the variable $v$, and the value of $\xi(v)$ increases as $v$ increases. This observation, together with Lemma \ref{LNSS2}, leads us to the reduction of integration interval, so the integral is comparable to
    \begin{equation*}
        \tau^{-\gamma - \frac{d}{2}} \int \limits_0^1 \big(\tau + \pi - \arccos\xi(v)\big)^{-\gamma - \frac{d}{2} + \frac{1}{2}} \exp \bigg\{-\frac{\arccos^2\xi(v)}{4\tau} \bigg\} \measure{\gamma}{v}.
    \end{equation*}

    Now we use Lemma \ref{LNSS3} (and Remark \ref{remark1}) with $\eta = \gamma + \frac{d}{2} - \frac{1}{2}$, $A = I_1$, $B = I_2$, $D = 4\tau$, and $\nu = \gamma - \frac{1}{2}$. We obtain that $h^E_\tau\big(w_{\beta=0, \gamma}; (x, t), (y, s)\big)$ is comparable to
    \begin{equation*}
        \tau^{-\frac{d}{2}} \big( \tau + \pi - \arccos\xi(1) \big)^{-\gamma - \frac{d}{2} + \frac{1}{2}} \bigg(\frac{I_2}{\pi - \arccos\xi(1)} + \tau\bigg)^{-\gamma} \exp \bigg\{-\frac{\arccos^2\xi(1)}{4\tau} \bigg\}.
    \end{equation*}

    After noticing that 
    \begin{equation*}
        \bigg(\frac{I_2}{\pi - \arccos\xi(1)} + \tau\bigg)^{-\gamma} \simeq \big(I_2 + \tau \big)^{-\gamma} \simeq \big(I_2 \vee \tau \big)^{-\gamma},
    \end{equation*}
    the proof is completed.
\end{proof}

\begin{proof}[Proof of Theorem~\ref{T2}]
    Let $(x, y), (y, s) \in \HI$ and $\tau \in (0, 1]$ be fixed. Using the estimate from Lemma \ref{LNSS1} we obtain that the integrand in $h^E_\tau\big(w_{\beta=\frac{1}{2}, \gamma, \mu}; (x, t), (y, s)\big)$ is comparable to
    \begin{equation*}
        \tau^{-\gamma - \mu - \frac{d}{2} - \frac{1}{2}} \big(\tau + \pi - \arccos\xi(u, v)\big)^{-\gamma - \mu - \frac{d}{2}} \exp \bigg\{-\frac{\arccos^2\xi(u, v)}{4\tau}\bigg\}.
    \end{equation*}
    The above formula is well defined, because $\xi(u, v) \in [-1, 1]$, as proved in Lemma \ref{Lemma: range-xi}. 
    
    Observe that the value of $\xi(u, v)$ increases as $v$ increases. Moreover, $\xi(u, v)$ increases along with $u$ when $\sign(st) = 0$ or $\sign(st) = 1$. If $\sign(st) = -1$, however, $\xi(u, v)$ increases when $u$~decreases. Here we can apply a simple substitution of variables, i.e. $w = u$ if $\sign(st) \in \{0, 1\}$ and $w = -u$ otherwise. Then, as needed, $\xi(w, v)$ will increase along with~$w$.
    
    This observation, together with Lemma \ref{LNSS2}, leads us to the reduction of the intervals in the~integral
    \begin{equation*}
      \tau^{-\gamma - \mu - \frac{d}{2} - \frac{1}{2}} \int \limits_{0}^{1} \int \limits_{0}^{1} \big( \tau + \pi - \arccos \xi(w, v) \big)^{-\gamma - \mu - \frac{d}{2}} \exp\bigg\{-\frac{\arccos^2 \xi(w, v)}{4\tau}\bigg\} \measure{\mu}{w} \measure{\gamma}{v}. 
    \end{equation*}
    
    To complete the proof we use Lemma \ref{LNSS3} and Remark \ref{remark1} to variables $u$ and $v$ separately.
    
    Let us start with variable $u$. We put $\eta = \gamma + \mu + \frac{d}{2}$, $A = I_1 + vI_2$, $B = I_3$, $D = 4\tau$, and $\nu = \mu - \frac{1}{2}$. We get that
    \begin{align*}
        &h^E_\tau\big(w_{\beta=\frac{1}{2}, \gamma, \mu}; (x, t), (y, s)\big) \simeq \tau^{-\gamma - \frac{d}{2} - \frac{1}{2}} \int \limits_{0}^{1} \big(\tau + \pi - \arccos \xi(1, v)\big)^{-\gamma - \mu - \frac{d}{2}} \\
        &\qquad \times \bigg(\frac{I_3}{\pi - \arccos\xi(1, v)} + \tau\bigg)^{-\mu} \exp \bigg\{-\frac{\arccos^2 \xi(1, v)}{4\tau}\bigg\} \measure{\gamma}{v}. 
    \end{align*}
    Note that $\pi - \arccos\xi(1, v)$ is comparable to a constant, and as a consequence 
    \begin{equation*}
        \bigg(\frac{I_3}{\pi - \arccos\xi(1, v)} + \tau \bigg)^{-\mu} \simeq \big(I_3 + \tau \big)^{-\mu} \simeq \big(I_3 \vee \tau \big)^{-\mu}.
    \end{equation*}
    
    Now we can apply Lemma \ref{LNSS3} (and Remark \ref{remark1}) to the remaining variable $v$. We have $\eta = \gamma + \mu + \frac{d}{2}$, $A = I_1 + I_2$, $B = I_2$, $D = 4\tau$, and $\nu = \gamma - \frac{1}{2}$. After observing that $\pi - \arccos\xi(1, 1)$ is comparable to a constant we conclude that 
    \begin{align*}
        &h^E_\tau\big(w_{\beta=\frac{1}{2}, \gamma, \mu}; (x, t), (y, s)\big) \simeq \tau^{- \frac{d}{2} - \frac{1}{2}} \big(I_2 \vee \tau \big)^{-\gamma} \big(I_3 \vee \tau \big)^{-\mu} \\
        &\quad \times  \big(\tau + \pi - \arccos \xi(1, 1)\big)^{-\gamma - \mu - \frac{d}{2}} \exp \bigg\{-\frac{\arccos^2 \xi(1, 1)}{4\tau}\bigg\},
    \end{align*}
    which ends the proof.
\end{proof}

\section{Appendix} \label{Appendix}

\begin{lemma}
    \label{Lemma: range-xi}
    Let $\varrho \ge 0$ and $\varrho \le s, t \le \sqrt{\varrho^2 + 1}$. Moreover, let $u, v \in [-1, 1]$. We define
    \begin{align*}
        \xi_{\varrho}(x, t, y, s; u, v) &\coloneqq \big(\inner{x, y} + u\sqrt{t^2 - \varrho^2 - \norm{x}^2}\sqrt{s^2 - \varrho^2 - \norm{y}^2} \big)\sign(st) \\
        &\qquad + v\sqrt{1 + \varrho^2 - s^2} \sqrt{1 + \varrho^2 - t^2},
    \end{align*}
    where $\norm{x}^2 \le t^2 - \varrho^2$ and $\norm{y}^2 \le s^2 - \varrho^2$. Then we have $\abs{\xi_{\varrho}(x, t, y, s; u, v)} \le 1$.
\end{lemma}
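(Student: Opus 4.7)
The plan is to bound $|\xi_\varrho(x,t,y,s;u,v)|$ above by a clean chain of Cauchy--Schwarz inequalities. First, I would use that $|u|, |v| \le 1$ and $|\sign(st)| \le 1$, together with the triangle inequality, to strip out the parameters $u$, $v$ and the sign factor, obtaining
\[
|\xi_\varrho(x,t,y,s;u,v)| \le |\inner{x,y}| + \sqrt{t^2 - \varrho^2 - \norm{x}^2}\sqrt{s^2 - \varrho^2 - \norm{y}^2} + \sqrt{1 + \varrho^2 - t^2}\sqrt{1 + \varrho^2 - s^2}.
\]
All four radicands are nonnegative thanks to the hypotheses $\norm{x}^2 \le t^2 - \varrho^2$, $\norm{y}^2 \le s^2 - \varrho^2$, and $s, t \le \sqrt{\varrho^2 + 1}$, so this upper bound is well defined.

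Next, I would apply Cauchy--Schwarz (in the form $ab + cd \le \sqrt{a^2+c^2}\sqrt{b^2+d^2}$) twice, after first estimating $|\inner{x,y}| \le \norm{x}\,\norm{y}$. For the first application I would choose $a = \norm{x}$, $c = \sqrt{t^2 - \varrho^2 - \norm{x}^2}$, $b = \norm{y}$, $d = \sqrt{s^2 - \varrho^2 - \norm{y}^2}$, so that $a^2 + c^2 = t^2 - \varrho^2$ and $b^2 + d^2 = s^2 - \varrho^2$; this merges the first two summands into $\sqrt{t^2 - \varrho^2}\sqrt{s^2 - \varrho^2}$. Combining the resulting single product with the third term $\sqrt{1+\varrho^2-t^2}\sqrt{1+\varrho^2-s^2}$ by the same inequality then produces
\[
\sqrt{(t^2 - \varrho^2) + (1 + \varrho^2 - t^2)}\,\sqrt{(s^2 - \varrho^2) + (1 + \varrho^2 - s^2)} = \sqrt{1}\cdot\sqrt{1} = 1.
\]

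There is essentially no serious obstacle, since the entire argument reduces to a textbook double application of Cauchy--Schwarz. The only point requiring mild care is the verification that every radicand stays nonnegative, which is automatic from the defining constraints of $\prescript{}{\varrho}{\mathbb{V}}^{d+1}$ and $\prescript{}{\varrho}{\mathbb{V}}^{d+1}_0$.
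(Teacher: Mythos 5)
Your proof is correct: all four radicands are nonnegative under the hypotheses, the triangle inequality together with $\abs{u}, \abs{v}, \abs{\sign(st)} \le 1$ legitimately strips out the parameters, and the two applications of the two-dimensional Cauchy--Schwarz inequality $ab + cd \le \sqrt{a^2+c^2}\,\sqrt{b^2+d^2}$ telescope exactly as claimed, since $\norm{x}^2 + (t^2 - \varrho^2 - \norm{x}^2) = t^2 - \varrho^2$ and $(t^2-\varrho^2) + (1+\varrho^2-t^2) = 1$. The paper reaches the same conclusion by a different decomposition: rather than discarding $u$, $v$, and $\sign(st)$ at the outset, it sets $\alpha \coloneqq \sqrt{t^2-\varrho^2}$, $\beta \coloneqq \sqrt{s^2-\varrho^2}$ and builds the augmented vectors $\tilde{x} \coloneqq \big(x,\, u\sqrt{\alpha^2-\norm{x}^2},\, v\sqrt{1-\alpha^2}\big)$ and $\tilde{y} \coloneqq \big(\sign(st)\,y,\, \sign(st)\sqrt{\beta^2-\norm{y}^2},\, \sqrt{1-\beta^2}\big)$, so that $\xi_{\varrho} = \inner{\tilde{x}, \tilde{y}}$ holds as an \emph{exact identity}; a single application of Cauchy--Schwarz with $\norm{\tilde{x}} \le 1$ and $\norm{\tilde{y}} \le 1$ then finishes the argument. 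The two proofs are close cousins --- your iterated scalar Cauchy--Schwarz is essentially an unrolled version of the vector inequality --- but they buy slightly different things. Yours is more elementary and modular, requiring no auxiliary construction and making the role of each hypothesis visible step by step. The paper's version represents $\xi_{\varrho}$ itself as an inner product of vectors in the closed unit ball, which is conceptually cleaner (it explains why $\xi_{\varrho}$ behaves like a cosine, consistent with the appearance of $\arccos \xi$ throughout the main theorems) and keeps the dependence on $u$, $v$, and $\sign(st)$ intact until the very last step instead of majorizing it away. Either argument is complete and fully rigorous for the lemma as stated.
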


\begin{proof}
    Observe that $0 \le t^2 - \varrho^2 \le 1$ and $0 \le s^2 - \varrho^2 \le 1$. We define $\alpha \coloneqq \sqrt{t^2 - \varrho^2}$ and $\beta \coloneqq \sqrt{s^2 - \varrho^2}$. We have
    \begin{gather*}
        \big(\inner{x, y} + u\sqrt{t^2 - \varrho^2 - \norm{x}^2}\sqrt{s^2 - \varrho^2 - \norm{y}^2} \big)\sign(st) + v\sqrt{1 + \varrho^2 - s^2} \sqrt{1 + \varrho^2 - t^2} \\
        = \big(\inner{x, y} + u\sqrt{\alpha^2 - \norm{x}^2}\sqrt{\beta^2 - \norm{y}^2} \big)\sign(st) + v\sqrt{1 - \alpha^2}\sqrt{1 - \beta^2}.
    \end{gather*}

    Consider two vectors, $\tilde{x}$ and $\tilde{y}$, defined as follows
    \begin{equation*}
        \tilde{x} \coloneqq \big(x, u\sqrt{\alpha^2 - \norm{x}^2}, v\sqrt{1 - \alpha^2}\big), \qquad \tilde{y} \coloneqq \big(\sign(st)y, \sign(st)\sqrt{\beta^2 - \norm{y}^2}, \sqrt{1 - \beta^2}\big).
    \end{equation*}

    Note that $\inner{\tilde{x}, \tilde{y}} = \xi_{\varrho}(x, t, y, s; u, v)$. Moreover, a straightforward computation shows that $\norm{\tilde{x}}^2 \le \norm{\tilde{y}}^2 = 1$. The application of the Cauchy--Schwarz inequality completes the proof.
\end{proof}

\begin{remark}
    \label{Remark: range-xi}
    In Lemma \ref{Lemma: range-xi}, we can distinguish several special cases:
    \begin{enumerate}
        \item the case $\xi = 0$ corresponds to the surface of the double cone and the solid double cone, where the case $\xi > 0$ relates to the hyperbolic settings,
        \item the case $u = 0$ corresponds to the surface of the double cone and the solid double cone, where the case $u \in [-1, 1]$ relates to the hyperbolic settings.
    \end{enumerate}
\end{remark}

\end{document}